\newtheorem{theorem}{Theorem}[section]
\newtheorem{definition}{Definition}[section]
\newtheorem{proposition}{Proposition}[section]
\newtheorem{remark}{Remark}[section]
\newtheorem{corollary}{Corollary}[section]
\def\geq{\geqslant}\def\leq{\leqslant}
\begin{document}
	
\title{\bf Homogeneous mixed Herz-Morrey spaces and its Applications }
\author[1]{Mingwei Shi}
\author[2]{Jiang Zhou\thanks{Corresponding author. The research was supported by National Natural Science Foundation of China (12061069).}}

\affil[1,2]{College of Mathematics and System Sciences, Xinjiang University, Urumqi 830046, China}
%\affil[2]{College of Mathematics and System Sciences, Xinjiang University, Urumqi 830046, China}
%\author{Mingwei Shi, Jiang Zhou\thanks{Corresponding author. The research was supported by National Natural Science Foundation of China (12061069).} \\[.5cm]}

%\footnotetext{Received date: }
%\footnotetext{The research was supported by National Natural Science Foundation of China (11661075 and 11826202).}
%\footnotetext{$*$ Corresponding author}
\date{}
\maketitle
%\begin{quote}
{\bf Abstract:}{
In this paper, we introduce homogeneous mixed Herz-Morrey spaces $M\dot{K}_{p,\vec{q}}^{\alpha,\lambda}(\mathbb{R}^n)$  and show it's some properties. Firstly,  the boundedness of sublinear operators, fractional type operators in homogeneous mixed Herz-Morrey spaces is investigated. In particular, the above results are still valid for Calder$\acute{o}$n-Zygmund operators and fractional maximal operators. Lastly, the boundedness of  their commutators  in  homogeneous mixed Herz-Morrey spaces is obtained. }\par
{\bf Key Words:} homogeneous mixed Herz-Morrey spaces; Hardy-Littlewood maximal operators; sublinear operators;  commutators 

{\bf Mathematics Subject Classification(2010):}  42B20, 42B25, 42B35
%\end{quote}

\baselineskip 15pt

\section{Introduction}
 In 1938,  Morrey \cite{CC1938} introduced Morrey spaces      $\mathcal{M}_{p,\lambda}(\mathbb{R}^n)$. Due to the importance of Morrey spaces   $\mathcal{M}_{p,\lambda}(\mathbb{R}^n)$ in partial differential equation, many authors defined the generalization of Morrey spaces and combined it with other spaces. In 2005, Lu and Xu \cite{SL2005} introduced  homogeneous Herz-Morrey spaces $M\dot{K}_{p,q}^{\alpha,\lambda}(\mathbb{R}^n)$  which are  generalization of Morrey spaces $\mathcal{M}_{p,\lambda}(\mathbb{R}^n)$  and homogeneous  Herz spaces $\dot{K}_{q}^{\alpha, p}(\mathbb{R}^n)$. And defined as follows ($\alpha \in \mathbb{R}$, $ 0<p\leq \infty$, $0\leq\lambda< \infty$, $0 < q< \infty$),
 $$
 M\dot{K}_{p,q}^{\alpha,\lambda}(\mathbb{R}^n):=\{f\in L_{loc}^{q}: \|f\|_{M\dot{K}_{p,q}^{\alpha,\lambda}(\mathbb{R}^n)}<\infty \},
 $$
 where
 $$
\|f\|_{M\dot{K}_{p,q}^{\alpha,\lambda}(\mathbb{R}^n)}= \mathop{sup}\limits_{k_0 \in \mathbb{Z}}2^{-k_{0}\lambda} \left\{\sum_{k=-\infty}^{k_0}2^{k \alpha p}\|f\chi_{k}\|_{L^{q}}^p\right\}^{\frac{1}{p}}<\infty,
 $$
it is obvious that $M \dot{K}_{p, q}^{\alpha, 0}\left(\mathbb{R}^{n}\right)=\dot{K}_{q}^{\alpha, p}\left(\mathbb{R}^{n}\right)$ and $M_{q}^{\lambda}\left(\mathbb{R}^{n}\right) \subset M \dot{K}_{q, q}^{0, \lambda}\left(\mathbb{R}^{n}\right)$.\par
 In recent years, researchers have done more research in homogeneous Herz-Morrey spaces   $M\dot{K}_{p,q}^{\alpha,\lambda}(\mathbb{R}^n)$. Specifically speaking in 2005, Lu and Xu \cite{SL2005} proved the boundedness of rough Singular integral operators. In 2009, Tao, Shi and Zhang \cite{TY009} discussed the boundedness of multilinear Riesz potential operators on  Herz-Morrey spaces  $M\dot{K}_{p,q}^{\alpha,\lambda}(\mathbb{R}^n)$. In 2010, Izuki \cite{M2010} researched fractional integral with variable exponent on Herz-Morrey spaces $M \dot{K}_{q, p(\cdot)}^{\alpha, \lambda}\left(\mathbb{R}^{n}\right)(\alpha \in \mathbb{R},   0<q<\infty, p(\cdot) \in \mathscr{P}\left(\mathbb{R}^{n}\right), 0 \leq \lambda<\infty)$; Gao and Jiang \cite{GW2010} proved the boundedness of a class of linear commutators; Shi, Tao and Zhang  \cite{YTZ2010} considered multilinear Riesz potential operators on Herz-Morrey Spaces with non-doubling measures. In 2012,  Zhou and Zhu  \cite{ZJ2012} testified the boundedness of multilinear singular integrals and their commutators on Herz-Morrey spaces with non-doubling measures. In 2015, Mizuta and Ohno \cite{YT2015} obtained  Herz-Morrey spaces $M \dot{K}_{q, p(\cdot)}^{\alpha, \lambda}\left(\mathbb{R}^{n}\right)$ of variable exponent, Riesz potential operators and duality.
\par
In 1961, Benedek and Panzone\cite{AB1961} introduced mixed Lebesgue spaces $L^{\vec{p}}(\mathbb{R}^n) (0<\vec{p}\leq \infty)$. Recently, due to the wider usefulness of mixed function spaces on partial differential equations. Mixed spaces have more applications and complex structures, so the study of mixed norm is more difficult in harmonic analysis. Results of previous studies,  mixed Morrey spaces $M_{\vec{q}}^{\lambda}(\mathbb{R}^n)(0\leq \lambda < \infty,0<\vec{q}\leq \infty)$ were first introduced by Nogayama\cite{TN2019} in 2019; homogeneous mixed Herz spaces $\dot{K}_{\vec{q}}^{\alpha, p}(\mathbb{R}^n)(\alpha \in \mathbb{R},0<p\leq \infty,0<\vec{q}\leq \infty)$ were first  introduced by Wei 
\cite {MW2021} in 2021.
\par
In this paper, mixed Herz-Morrey spaces are defined, and some properties and boundedness of operators are testified.
\par
 Below is the structure of this paper: in Section 2, we introduce homogeneous mixed  Herz-Morrey spaces $M\dot{K}_{p,\vec{q}}^{\alpha,\lambda}(\mathbb{R}^n)$ and study some of their properties. In Sections 3-4, the boundedness of a wide class of sublinear operators, fractional type operators and their commutators on homogeneous mixed Herz-Morrey spaces $M\dot{K}_{p,\vec{q}}^{\alpha,\lambda}(\mathbb{R}^n)$ are proved.
\section{Definitions and Preliminaries }
In this section, some definitions and properties about  homogeneous mixed Herz-Morrey spaces $M\dot{K}_{p,\vec{q}}^{\alpha,\lambda}(\mathbb{R}^n)$ are introduced. There are some symbols as follows. Let $B_k=\{x\in\mathbb{R}^n: |x|\leq 2^k\}$ and $A_k=B_k\backslash{B_{k-1}}$ for any $k\in \mathbb{Z}$. Denote $\chi_{_k}=\chi_{_{A_k}}$, for any $k\in \mathbb{Z}$,  where $\chi_{_E}$ is the characteristic function of set $E$. And throughout this paper, we use following notations. The letter $\vec{q}$ will denote n-tuples of the numbers in $(0,\infty)^n$ $(n \geq 1)$, $\vec{q}=\left(q_1, q_2, \dots, q_n\right)$. By definition, the inequality $0<\vec{q}<\infty$ means that $0<q_i<\infty$ for all $i$. For $\vec{q}=\left(q_1, q_2, \dots, q_n\right)$ and $k\in \mathbb{R}$, let\\
$$\frac{1}{\vec{q}}=\left(\frac{1}{q_1}, \frac{1}{q_2},\dots, \frac{1}{q_n}\right),\quad \frac{\vec{q}}{k}=\left(\frac{q_1}{k}, \frac{q_2}{k},\dots, \frac{q_n}{k}\right),\quad \vec{q^{\prime}}=\left(q_1^{\prime},q_2^{\prime},\dots, q_n^{\prime}\right),$$
where $q_j^{\prime}=\frac{q_j}{q_j-1}$ is the conjugate exponent of $q_j$. $|B|$ denotes the volume of the ball $B$.

\begin{definition}(Mixed Lebesgue spaces) \cite{AB1961}
	Let $\vec{p}=\left(p_1, p_2,\dots, p_n\right)\in (0,\infty]^n.$ Then  mixed Lebesgue spaces $L^{\vec{p}}(\mathbb{R}^n)$ is defined to be the set of all measurable functions $f$ such that their quasi-norms\\
	$$\left\|f\right \|_{\vec{p}}\equiv \left(\int_{\mathbb{R}}\dots\left(\int_{\mathbb{R}}\left(\int_{\mathbb{R}}\mid f(x_1, x_2,\dots, x_n)\mid^{p_1}dx_1\right)^{\frac{p_2}{p_1}}dx_2\right)^{\frac{p_3}{p_2}}\dots dx_n\right)^{\frac{1}{p_n}}< \infty ,$$
	if $p_j=\infty,$ then we have to make appropriate modifications.
	
\end{definition}

\begin{definition}(Mixed Morrey spaces) \cite{TN2019}
	Let $0\leq \lambda < \infty$,  $0<\vec{q}\leq \infty$, where $\vec{q}=(q_1,q_2,\dots,q_n)$, then define mixed morrey spaces $M_{\vec{q}}^{\lambda}(\mathbb{R}^n)$ by\\
	$$M_{\vec{q}}^{\lambda}(\mathbb{R}^n)(\mathbb{R}^n):=\{f\in L_{loc}^{\vec{q}}: \|f\|_{M_{\vec{q}}^{\lambda}(\mathbb{R}^n)}< \infty\},$$\\
	where$$\|f\|_{M_{\vec{q}}^{\lambda}(\mathbb{R}^n)}=sup ~r^{-\lambda}\left\| f\chi_{k} \right\|_{L^{\vec{q}}}.$$	
\end{definition}

\begin{definition}(Homogeneous  Mixed Herz spaces) \cite{MW2021}
Let $\alpha \in \mathbb{R}$, $0<p\leq \infty$, $0<\vec{q}\leq \infty$, where $\vec{q}=(q_1,q_2,\dots,q_n)$, then define  homogeneous mixed Herz spaces $\dot{K}_{\vec{q}}^{\alpha, p}(\mathbb{R}^n)$  by\\
$$\dot{K}_{\vec{q}}^{\alpha, p}(\mathbb{R}^n):=\{f\in L_{loc}^{\vec{q}}: \|f\|_{\dot{K}_{\vec{q}}^{\alpha, p}}< \infty\},$$\\
where$$\|f\|_{\dot{K}_{\vec{q}}^{\alpha, p}}=\left(\sum_{k\in \mathbb{Z}}2^{k \alpha p}\|f\chi_k\|_{L^{\vec{q}}}^p\right)^{\frac{1}{p}}.$$	
\end{definition}
Generalize the above $M_{\vec{q}}^{\lambda}(\mathbb{R}^n)$ and $\dot{K}_{\vec{q}}^{\alpha, p}(\mathbb{R}^n)$ to a new class of function spaces called homogeneous mixed Herz-Morrey spaces  $M\dot{K}_{p,\vec{q}}^{\alpha,\lambda}(\mathbb{R}^n)$, which is defined as follows.
\begin{definition} (Homogeneous Mixed Herz-Morrey spaces)	
Let $\alpha \in \mathbb{R}$, $ 0<p< \infty$, $0\leq\lambda< \infty$, $1 \leq \vec{q}< \infty$, where $\vec{q}=(q_1,q_2,\dots,q_n)$, then define homogeneous mixed Herz-Morrey spaces $M\dot{K}_{p,\vec{q}}^{\alpha,\lambda}(\mathbb{R}^n)$  by\\
	$$M\dot{K}_{p,\vec{q}}^{\alpha,\lambda}(\mathbb{R}^n):=\{f\in L_{loc}^{\vec{q}}: \|f\|_{M\dot{K}_{p,\vec{q}}^{\alpha,\lambda}(\mathbb{R}^n)}<\infty \},$$\\
	where
	$$\|f\|_{M\dot{K}_{p,\vec{q}}^{\alpha,\lambda}}=\|f\|_{M\dot{K}_{p,\vec{q}}^{\alpha,\lambda}(\mathbb{R}^n)}= \mathop{sup}\limits_{k_0 \in \mathbb{Z}}2^{-k_{0}\lambda} \left\{\sum_{k=-\infty}^{k_0}2^{k \alpha p}\|f\chi_{k}\|_{L^{\vec{q}}}^p\right\}^{\frac{1}{p}}.$$	
\end{definition}
\begin{remark}
	As well known that $L^{\vec{p}}(\mathbb{R}^n)=L^p(\mathbb{R}^n)$ for $\vec{p}=\left(p, p,\dots, p\right)$, so it is obvious that $M\dot{K}_{p,\vec{q}}^{\alpha,\lambda}(\mathbb{R}^n)=M\dot{K}_{p,q}^{\alpha,\lambda}(\mathbb{R}^n)$ for $\vec{q}=\left(q, q,\dots, q\right)$ and  $ M\dot{K}_{p,\vec{q}}^{\alpha,0}(\mathbb{R}^n)=\dot{K}_{\vec{q}}^{\alpha, p}(\mathbb{R}^n).$	
\end{remark}
\begin{proposition}
	Homogeneous mixed Herz-Morrey spaces $M\dot{K}_{p,\vec{q}}^{\alpha,\lambda}(\mathbb{R}^n)$  are  quasi-Banach spaces if $  0<p, \vec{q} <1$, $M\dot{K}_{p,\vec{q}}^{\alpha,\lambda}(\mathbb{R}^n)$  are Banach spaces if $1\leq p,\vec{q}\leq \infty$.	
\end{proposition}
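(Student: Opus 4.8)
The plan is to establish the two defining features of a (quasi-)Banach space in turn: that $\|\cdot\|_{M\dot{K}_{p,\vec{q}}^{\alpha,\lambda}}$ is a quasi-norm — a genuine norm when $1\le p,\vec{q}\le\infty$ — on the vector space $M\dot{K}_{p,\vec{q}}^{\alpha,\lambda}(\mathbb{R}^n)$, and that this space is complete for it.

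For the (quasi-)norm axioms, positive homogeneity and the implication $\|f\|_{M\dot{K}_{p,\vec{q}}^{\alpha,\lambda}}=0\Rightarrow f=0$ a.e.\ follow immediately from the corresponding facts for $\|\cdot\|_{L^{\vec{q}}}$ (Definition 2.1) together with $\bigcup_{k\in\mathbb{Z}}A_k=\mathbb{R}^n\setminus\{0\}$. For the triangle inequality I would recall that $\|\cdot\|_{L^{\vec{q}}}$ is a norm when $1\le\vec{q}\le\infty$ and in general satisfies $\|g+h\|_{L^{\vec{q}}}^{\rho_0}\le\|g\|_{L^{\vec{q}}}^{\rho_0}+\|h\|_{L^{\vec{q}}}^{\rho_0}$ with $\rho_0=\min(q_1,\dots,q_n,1)$, and that $\ell^p$ satisfies the same type of estimate with exponent $\min(p,1)$. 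Setting $\rho=\min(p,q_1,\dots,q_n,1)$, one first gets $2^{k\alpha\rho_0}\|(f+g)\chi_k\|_{L^{\vec{q}}}^{\rho_0}\le 2^{k\alpha\rho_0}\|f\chi_k\|_{L^{\vec{q}}}^{\rho_0}+2^{k\alpha\rho_0}\|g\chi_k\|_{L^{\vec{q}}}^{\rho_0}$ for each $k$, and then, applying the $\ell^p$ estimate (raised to the appropriate power), for each fixed $k_0$,
$$
\Big(\sum_{k\le k_0}2^{k\alpha p}\|(f+g)\chi_k\|_{L^{\vec{q}}}^p\Big)^{\rho/p}\le\Big(\sum_{k\le k_0}2^{k\alpha p}\|f\chi_k\|_{L^{\vec{q}}}^p\Big)^{\rho/p}+\Big(\sum_{k\le k_0}2^{k\alpha p}\|g\chi_k\|_{L^{\vec{q}}}^p\Big)^{\rho/p}.
$$
Multiplying by $2^{-k_0\lambda\rho}$ and taking the supremum over $k_0$ (which preserves the inequality, since $t\mapsto t^{\rho}$ is increasing and the supremum of a sum is bounded by the sum of suprema) gives $\|f+g\|_{M\dot{K}_{p,\vec{q}}^{\alpha,\lambda}}^{\rho}\le\|f\|_{M\dot{K}_{p,\vec{q}}^{\alpha,\lambda}}^{\rho}+\|g\|_{M\dot{K}_{p,\vec{q}}^{\alpha,\lambda}}^{\rho}$; for $1\le p,\vec{q}\le\infty$ we have $\rho=1$ and this is the ordinary triangle inequality, while for $0<p,\vec{q}<1$ it yields the quasi-triangle inequality with constant $2^{1/\rho-1}$ (with the obvious modifications if $p=\infty$ or some $q_i=\infty$).

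For completeness, I would take a Cauchy sequence $\{f_j\}$ in $M\dot{K}_{p,\vec{q}}^{\alpha,\lambda}$ and note that choosing $k_0=k$ in the definition of the norm gives $2^{-k\lambda}2^{k\alpha}\|(f_j-f_l)\chi_k\|_{L^{\vec{q}}}\le\|f_j-f_l\|_{M\dot{K}_{p,\vec{q}}^{\alpha,\lambda}}$, so $\{f_j\chi_k\}_j$ is Cauchy in the complete space $L^{\vec{q}}$ (Benedek--Panzone, Definition 2.1) and converges there to some $g_k$ supported in $A_k$. Since the $A_k$ are pairwise disjoint with union $\mathbb{R}^n\setminus\{0\}$, the function $f:=\sum_{k\in\mathbb{Z}}g_k$ is well defined a.e., measurable, and obeys $f\chi_k=\lim_j f_j\chi_k$ in $L^{\vec{q}}$ for every $k$. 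To upgrade this to $M\dot{K}_{p,\vec{q}}^{\alpha,\lambda}$-convergence, fix $\varepsilon>0$, pick $N$ with $\|f_j-f_l\|_{M\dot{K}_{p,\vec{q}}^{\alpha,\lambda}}<\varepsilon$ for $j,l\ge N$, so that for every $k_0\in\mathbb{Z}$ and every $K\in\mathbb{N}$
$$
2^{-k_0\lambda}\Big(\sum_{k=-K}^{k_0}2^{k\alpha p}\|(f_j-f_l)\chi_k\|_{L^{\vec{q}}}^p\Big)^{1/p}<\varepsilon\qquad(j,l\ge N);
$$
letting $l\to\infty$ (this moves only the finitely many norms $\|(f_j-f_l)\chi_k\|_{L^{\vec{q}}}$ with $-K\le k\le k_0$, each tending to $\|(f_j-f)\chi_k\|_{L^{\vec{q}}}$), then $K\to\infty$, then taking the supremum over $k_0$, yields $\|f_j-f\|_{M\dot{K}_{p,\vec{q}}^{\alpha,\lambda}}\le\varepsilon$ for $j\ge N$; in particular $f=f_N+(f-f_N)\in M\dot{K}_{p,\vec{q}}^{\alpha,\lambda}$ and $f_j\to f$.

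The verification of the (quasi-)norm axioms is routine. The step I expect to require the most care is the very last limit in the completeness argument — interchanging $\lim_{l\to\infty}$ with the summation over $k$ and the supremum over $k_0$ — which is why I would first truncate to the finite range $-K\le k\le k_0$, pass to the limit there where only finitely many $L^{\vec{q}}$-norms are in play, and only afterwards remove the truncation and take the supremum. In the quasi-Banach regime $0<p,\vec{q}<1$ the only additional ingredient is the continuity of $\|\cdot\|_{L^{\vec{q}}}$ in the form $\big|\,\|g\|_{L^{\vec{q}}}^{\rho_0}-\|h\|_{L^{\vec{q}}}^{\rho_0}\,\big|\le\|g-h\|_{L^{\vec{q}}}^{\rho_0}$, which is a consequence of the $\rho_0$-subadditivity used above.
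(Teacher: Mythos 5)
Your proof is correct, and in the completeness half it takes a genuinely different (and more careful) route than the paper. For the quasi-triangle inequality both arguments combine the quasi-subadditivity of $\|\cdot\|_{L^{\vec q}}$ with an $\ell^p$ estimate; the paper does this with explicit constants such as $\max\bigl(1,2^{\sum_{i}(1-q_i)/q_i}\bigr)$ after assuming $1\le p$ ``without loss of generality'', while your formulation via the single exponent $\rho=\min(p,q_1,\dots,q_n,1)$ and the inequality $\|f+g\|^{\rho}\le\|f\|^{\rho}+\|g\|^{\rho}$ is cleaner and covers all parameter ranges uniformly (and your two-step reduction --- first $\rho_0$-subadditivity in $L^{\vec q}$ blockwise, then the $\ell^{p/\rho_0}$ or $\ell^p$ estimate --- does check out). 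For completeness the paper uses the standard criterion via a rapidly Cauchy subsequence: it selects $\|f_{j+1}-f_j\|_{M\dot{K}_{p,\vec q}^{\alpha,\lambda}}\le 2^{-j}$, sets $f=f_1+\sum_{j}(f_{j+1}-f_j)$, and estimates the tails; but it never justifies that this series converges pointwise a.e.\ to a measurable function, which is the one nontrivial point of that approach. You instead identify the candidate limit block by block, using that $\{f_j\chi_k\}_j$ is Cauchy in the complete space $L^{\vec q}$ (obtained by specializing $k_0=k$ in the norm) and that the annuli $A_k$ are pairwise disjoint, and you then upgrade to convergence in the full norm by truncating to finitely many blocks before interchanging limits. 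This costs a slightly longer argument but yields a fully rigorous construction of the limit, and in effect repairs the gap in the paper's version; the only ingredient you need beyond the paper's toolkit is the continuity estimate $\bigl|\,\|g\|_{L^{\vec q}}^{\rho_0}-\|h\|_{L^{\vec q}}^{\rho_0}\bigr|\le\|g-h\|_{L^{\vec q}}^{\rho_0}$, which you correctly flag.
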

\begin{proof}
	First, it is proved that $\|\cdot\|_{ M\dot{K}_{p,\vec{q}}^{\alpha,\lambda}(\mathbb{R}^n)}$ in homogeneous mixed Herz-Morrey spaces    $M\dot{K}_{p,\vec{q}}^{\alpha,\lambda}(\mathbb{R}^n)$. The positivity and the homogeneity are clear. Therefore, the most important thing is to check the quasi-triangle inequality. Without losing the generality, it is assumed that $1\leq p$.
$$
\begin{aligned}
\|f+g\|_{M\dot{K}^{\alpha,\lambda}_{p,\vec{q}}}
&= \mathop{sup}\limits_{k_0\in \mathbb{Z}}2^{-k_{0}\lambda} \left\{\sum_{k=-\infty}^{k_0}2^{k \alpha p}\|(f+g)\chi_{k}\|_{L^{\vec{q}}}^p\right\}^{\frac{1}{p}}\\
&\leq \mathop{sup}\limits_{k_0\in \mathbb{Z}}2^{-k_{0}\lambda} \left\{\sum_{k=-\infty}^{k_0}2^{k \alpha p}\|f\chi_{k}+g\chi_{k}\|_{L^{\vec{q}}}^p\right\}^{\frac{1}{p}}\\
&\leq  \mathop{sup}\limits_{k_0\in \mathbb{Z}}2^{-k_{0}\lambda}\left(\sum_{k=-\infty}^{k_{0}}2^{k \alpha p}\left(max(1,2^{\sum\limits_
	{i=1}^n\frac{1-q_i}{qi}})\left(\|f\chi_k\|_{L^{\vec{q}}}+\|g\chi_k\|_{L^{\vec{q}}}\right)\right)^p\right)^{\frac{1}{p}}	\\
&\leq  max(1,2^{\sum\limits_{i=1}^n\frac{1-q_i}{qi}})\mathop{sup}\limits_{k_0\in \mathbb{Z}}2^{-k_{0}\lambda}\left(\sum_{k=-\infty}^{k_{0}}2^{k \alpha p} 2^{p-1} \left(\|f\chi_k\|_{L^{\vec{q}}}^p+\|g\chi_k\|_{L^{\vec{q}}}^p\right)\right)^{\frac{1}{p}}	\\
&\leq  max(1,2^{\sum\limits_{i=1}^n\frac{1-q_i}{qi}})  2^{1-p}2^{p-1}
\mathop{sup}\limits_{k_0\in \mathbb{Z}}2^{-k_{0}\lambda} \left[\left(\sum_{k=-\infty}^{k_{0}}2^{k \alpha p} \|f\chi_k\|_{L^{\vec{q}}}^p\right)^{\frac{1}{p}}+
\left(\sum_{k=-\infty}^{k_{0}}2^{k \alpha p} \|g\chi_k\|_{L^{\vec{q}}}^p\right)^{\frac{1}{p}}	\right]\\
			\end{aligned}
$$
$\quad~\quad \quad \quad\quad\quad 
\leq max(1,2^{\sum\limits_{i=1}^n\frac{1-q_i}{qi}}) (\|f\|_{M\dot{K}^{\alpha,\lambda}_{p,\vec{q}}}+\|g\|_{M\dot{K}^{\alpha,\lambda}_{p,\vec{q}}}).
$\\	
Then, when $1\leq q_i \leq \infty$ for any $i=1,2,\dots,n$, $\|\cdot\|_{ M\dot{K}_{p,\vec{q}}^{\alpha,\lambda}(\mathbb{R}^n)}$ is obtained. When $0<q_i<1$ for exists $i=1,2,\dots,n$, it is a quasi-norm. Next, we will display the completeness of space $ M\dot{K}_{p,\vec{q}}^{\alpha,\lambda}(\mathbb{R}^n).~	$ 
 Let a Cauchy sequence $\{f_j\}_{j=1}^{\infty}$ satisfied that
$$\|f_{j+1}-f_{j}\|_{M\dot{K}_{p,\vec{q}}^{\alpha,\lambda}} \leq 2^{-j}.$$ 
Let $f(x)=f_1(x)+\sum\limits_{j=1}^{\infty}\left(f_{j+1}(x)-f_{j}(x)\right)=\lim\limits_{j\rightarrow \infty}f_j(x)$, then $f\in M\dot{K}_{p,\vec{q}}^{\alpha,\lambda}(\mathbb{R}^n)$, \\
$$
\begin{aligned}
\|f\|_{M\dot{K}_{p,\vec{q}}^{\alpha,\lambda}}
&=	\left\|f_1+\sum_{j=1}^{\infty}\left(f_{j+1}-f_{j}\right)\right\|_{M\dot{K}_{p,\vec{q}}^{\alpha,\lambda}}\\
&\leq \|f_1\|_{M\dot{K}_{p,\vec{q}}^{\alpha,\lambda}}+\left\|\sum_{j=1}^{\infty}\left(f_{j+1}-f_{j}\right)\right\|_{M\dot{K}_{p,\vec{q}}^{\alpha,\lambda}}\\
&\leq \|f_1\|_{M\dot{K}_{p,\vec{q}}^{\alpha,\lambda}}+\sum_{j=1}^{\infty}\|f_{j+1}-f_{j}\|_{M\dot{K}_{p,\vec{q}}^{\alpha,\lambda}}\\
&\leq C\left(\|f_1\|_{M\dot{K}_{p,\vec{q}}^{\alpha,\lambda}}+\sum_{j=1}^{\infty}2^{-j}\right) \leq C.
\end{aligned}
$$
  Furthermore, $\{f_j\}_{j=1}^{\infty}$ converge to $f\in M\dot{K}_{p,\vec{q}}^{\alpha,\lambda}(\mathbb{R}^n)$,
  $$
\begin{aligned}
\|f-f_J\|_{M\dot{K}_{p,\vec{q}}^{\alpha,\lambda}}
&=\left\|f_1+\sum_{j=1}^{\infty}\left(f_{j+1}-f_{j}\right)-f_J\right\|_{M\dot{K}_{p,\vec{q}}^{\alpha,\lambda}}\\
&\leq \left\|\sum_{j=1}^{\infty}\left(f_{j+1}-f_{j}\right)-f_J-\sum_{j=1}^{J-1}\left(f_{j+1}-f_{j}\right)-f_J\right\|_{M\dot{K}_{p,\vec{q}}^{\alpha,\lambda}}\\
&\leq \left\|\sum_{j=J+1}^{\infty}\left(f_{j+1}-f_{j}\right)\right\|_{M\dot{K}_{p,\vec{q}}^{\alpha,\lambda}}\\
&\leq 2^{1-J}.
\end{aligned}
$$
Then
$$\lim\limits_{J\rightarrow \infty}\|f-f_J\|_{M\dot{K}_{p,\vec{q}}^{\alpha,\lambda}}=0.$$
Thus,  $M\dot{K}_{p,\vec{q}}^{\alpha,\lambda}(\mathbb{R}^n)$ are quasi-Banach spaces.
\end{proof}

\begin{proposition}
	Let $\alpha, \alpha_1, \alpha_2, \dots, \alpha_n \in \mathbb{R}$ and $\alpha=\alpha_1+\alpha_2+ \dots +\alpha_n$,   $0<p< \infty$, $0\leq\lambda< \infty$, $1 \leq \vec{q}< \infty$, where $\vec{q}=(q_1,q_2,\dots,q_n)$, then
	$$M\dot{K}_{p,\vec{q}}^{\alpha,\lambda}=
	M^{\lambda}_{\vec{q}~(\mathbb{R}^n, |x_1|^{\alpha_1q_1}\times|x_2|^{\alpha_2q_2}\dots \times|x_n|^{\alpha_n q_n})}.$$	
\end{proposition}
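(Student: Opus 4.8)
The plan is to establish the equivalence of the two (quasi-)norms in three moves: remove the weight, discretise the Morrey supremum over dyadic radii, and decompose a ball into the annuli $A_k$.

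First I would record that, since the weight $w(x)=|x_1|^{\alpha_1q_1}\cdots|x_n|^{\alpha_nq_n}$ is a product of one-variable power weights, the iterated structure of the mixed norm yields, for every measurable $g$,
$$\|g\|_{L^{\vec q}(w\,dx)}=\bigl\||x_1|^{\alpha_1}\cdots|x_n|^{\alpha_n}\,g\bigr\|_{L^{\vec q}}.$$
This is checked by evaluating the innermost integral $\int_{\mathbb R}|g|^{q_1}w\,dx_1$, in which the factors $|x_j|^{\alpha_jq_1}$ with $j\ge2$ are constants and come out, then raising to the power $q_2/q_1$, integrating in $x_2$, and iterating; only the one-variable identity $\bigl(\int_{\mathbb R}|x_j|^{\alpha_jq}|h|^{q}\,dx_j\bigr)^{1/q}=\bigl\||x_j|^{\alpha_j}h\bigr\|_{L^{q}_{x_j}}$ is used at each stage. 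Writing $W(x):=|x_1|^{\alpha_1}\cdots|x_n|^{\alpha_n}$, the right-hand norm in the Proposition therefore equals $\sup_{r>0}r^{-\lambda}\|Wf\,\chi_{B(0,r)}\|_{L^{\vec q}}$.

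Second, I would pass to dyadic radii: for $2^{k_0-1}<r\le2^{k_0}$ one has $B_{k_0-1}\subset B(0,r)\subset B_{k_0}$ and $2^{-k_0\lambda}\le r^{-\lambda}<2^{\lambda}2^{-k_0\lambda}$, whence
$$\sup_{r>0}r^{-\lambda}\|Wf\,\chi_{B(0,r)}\|_{L^{\vec q}}\ \approx\ \sup_{k_0\in\mathbb Z}2^{-k_0\lambda}\|Wf\,\chi_{B_{k_0}}\|_{L^{\vec q}}$$
with constants depending only on $\lambda$. Third, writing $B_{k_0}=\bigcup_{k\le k_0}A_k$ with the $A_k$ pairwise disjoint, I would compare $\|Wf\,\chi_{B_{k_0}}\|_{L^{\vec q}}$ with $\bigl(\sum_{k\le k_0}\|Wf\,\chi_k\|_{L^{\vec q}}^{p}\bigr)^{1/p}$, using the quasi-triangle inequality for $\|\cdot\|_{L^{\vec q}}$ from Proposition 2.1 and the disjointness of the annuli, with the Morrey weight $2^{-k_0\lambda}$ absorbing the discrepancy between the natural summation exponent and $p$; then on each $A_k$ I would replace $W$ by $2^{k\alpha}=2^{k(\alpha_1+\cdots+\alpha_n)}$, obtaining $\|Wf\,\chi_k\|_{L^{\vec q}}\sim2^{k\alpha}\|f\,\chi_k\|_{L^{\vec q}}$. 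Assembling these estimates and taking $\sup_{k_0}$ recovers $\|f\|_{M\dot K^{\alpha,\lambda}_{p,\vec q}}$.

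The subtle point, and the one I expect to demand the most care, is the final substitution in the third step. On a Euclidean annulus $A_k$ one only knows $|x|\sim2^k$, which controls $\max_i|x_i|$ but not the individual $|x_i|$: a point of $A_k$ can have $|x_1|\sim2^k$ with $|x_2|,\dots,|x_n|$ arbitrarily small, so $W(x)=\prod_i|x_i|^{\alpha_i}$ need not be comparable to $2^{k\alpha}$ on $A_k$, and the clean identity $\|Wf\,\chi_k\|_{L^{\vec q}}\sim2^{k\alpha}\|f\,\chi_k\|_{L^{\vec q}}$ can fail for general $f$. I would therefore handle this step by keeping the iterated order of integration and substituting $|x_i|\sim2^k$ only within the relevant layer of the mixed norm rather than pointwise, and I would first settle the one-dimensional case, where $|x|\sim2^k$ on $A_k$ makes the substitution legitimate, before reinstating the remaining variables one at a time; it is conceivable that the full strength of the statement requires $\lambda>0$ (when $\lambda=0$ the right-hand space is $L^{\vec q}(w)$ and is independent of $p$, whereas the left-hand space is not).
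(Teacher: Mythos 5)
Your first two steps are fine (with the understanding that the weight acts layer by layer, i.e.\ $dx_j$ is replaced by $|x_j|^{\alpha_j q_j}\,dx_j$ in the $j$-th integral, which is what the paper's notation intends), but the third step is where the proof has to happen and you do not carry it out; the gap you flag there is genuine and is fatal to the argument as you set it up. On the Euclidean annulus $A_k$ one has $\max_i|x_i|\sim 2^k$ but no lower bound on the remaining coordinates, so $W(x)=\prod_i|x_i|^{\alpha_i}$ is not comparable to $2^{k\alpha}$ (take $n=2$, $\alpha_1=1$, $\alpha_2=-1$: then $\alpha=0$ while $W=|x_1|/|x_2|$ is unbounded above and below on each $A_k$). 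Your proposed repair --- substituting $|x_i|\sim2^k$ only ``within the relevant layer'' --- does not work either: for fixed small $x_2,\dots,x_n$ the $x_1$-slice of $A_k$ is essentially $\{|x_1|\leq 2^k\}$, so the innermost variable is no more localized to $[2^{k-1},2^k)$ inside its layer than it was pointwise. There is also the exponent mismatch you only gesture at (``the Morrey weight absorbing the discrepancy''): decomposing $\|Wf\chi_{B_{k_0}}\|_{L^{\vec q}}$ over the $A_k$ produces no $\ell^p$ sum at all, and your own sanity check --- at $\lambda=0$ the right-hand space is independent of $p$ while the left-hand space is not --- already shows this discrepancy cannot be absorbed.

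For comparison, the paper's own proof does not resolve either difficulty: it replaces the exponent $p$ by $q_n$ from its very first line, and it replaces the Euclidean annuli $A_k$ by the coordinate-wise shells $\{x:2^{k-1}\leq|x_i|<2^k\ \text{for all } i\}$, on which $|x_i|\sim2^k$ does hold for every $i$ but which neither contain nor are contained in the $A_k$ and whose union over $k$ is a proper subset of $\mathbb{R}^n$; the inequality and ``$\sim$'' signs connecting these to the two norms are not justified, and only one direction of the claimed equality is addressed. So your diagnosis is accurate --- the obstruction you isolate is the real one --- but your proposal does not prove the proposition, and the proposition itself should be treated with suspicion unless the weighted mixed Morrey norm is reinterpreted over coordinate shells or additional hypotheses (e.g.\ $p=q_n$) are imposed.
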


\begin{proof}
	\begin{align*}
	\|f\|_{M\dot{K}_{p,\vec{q}}^{\alpha,\lambda}}
&=	\mathop{sup}\limits_{k_0\in \mathbb{Z}}2^{-k_{0}\lambda} \left(\sum_{k=-\infty}^{k_0}2^{k\alpha q_n}\left(\int_{\mathbb{R}}\dots \left(\int_{\mathbb{R}}\left(\int_{\mathbb{R}}|f\chi_k(x)|^{q_1}dx_1\right)^{\frac{q_2}{q_1}}dx_2\right)^{\frac{q_3}{q_2}}\dots dx_n\right)^{\frac{q_n}{q_n}}\right)^{\frac{1}{q_n}}	\\
	&\leq 
	\mathop{sup}\limits_{k_0\in \mathbb{Z}}2^{-k_{0}\lambda}\left(\sum_{k=-\infty}^{k_0}2^{k\alpha q_n}\left(\int_{2^{k-1}\leq|x_n|<2^k}\dots \left(\int_{2^{k-1}\leq|x_1|<2^k}|f(x)|^{q_1}dx_1\right)^{\frac{q_2}{q_1}}\dots dx_n\right)^{\frac{q_n}{q_n}}\right)^{\frac{1}{q_n}}	\\
  &=\mathop{sup}\limits_{k_0\in \mathbb{Z}}2^{-k_{0}\lambda}\left(\sum_{k=-\infty}^{k_0}\left(\int_{2^{k-1}\leq|x_n|<2^k}\dots \left(\int_{2^{k-1}\leq|x_1|<2^k}|f(x)|^{q_1}2^{k\alpha_1 q_1}dx_1\right)^{\frac{q_2}{q_1}}\dots 2^{k\alpha_n q_n}dx_n\right)\right)^{\frac{1}{q_n}}	\\
	&\sim
	\mathop{sup}\limits_{k_0\in \mathbb{Z}}2^{-k_{0}\lambda}\left(\sum_{k=-\infty}^{k_0}\left(\int_{2^{k-1}\leq|x_n|<2^k}\dots \left(\int_{2^{k-1}\leq|x_1|<2^k}|f(x)|^{q_1}|x_1|^{\alpha_1 q_1}dx_1\right)^{\frac{q_2}{q_1}}\dots |x_n|^{\alpha_n q_n}dx_n\right)\right)^{\frac{1}{q_n}}	\\
	&=
	\mathop{sup}\limits_{k_0\in \mathbb{Z}}2^{-k_{0}\lambda}\left(\left(\int_{\mathbb{R}}\dots \left(\int_{\mathbb{R}}|f(x)|^{q_1}|x_1|^{\alpha_1 q_1}dx_1\right)^{\frac{q_2}{q_1}}\dots |x_n|^{\alpha_n q_n}dx_n\right)\right)^{\frac{1}{q_n}}	\\
	&=M^{\lambda}_{\vec{q}~(\mathbb{R}^n, |x_1|^{\alpha_1q_1}\times|x_2|^{\alpha_2q_2}\dots \times|x_n|^{\alpha_n q_n})}.
	\end{align*}	
\end{proof}
 
\begin{proposition}
	Let $\alpha \in \mathbb{R}$,  $0<p< \infty$, $0\leq\lambda< \infty$, $1 \leq \vec{q}< \infty$, where $\vec{q}=(q_1,q_2,\dots,q_n)$. The following inclusions are valid.\\
	$(1)$ If $p_1\leq p_2$, then $M\dot{K}_{p_1,\vec{q}}^{\alpha,\lambda}(\mathbb{R}^n) \subset  M\dot{K}_{p_2,\vec{q}}^{\alpha,\lambda}(\mathbb{R}^n) $.\\
	$(2)$ If $\alpha_2\leq \alpha_1$, then $M\dot{K}_{p,\vec{q}}^{\alpha_1,\lambda}(\mathbb{R}^n) \subset  M\dot{K}_{p,\vec{q}}^{\alpha_2,\lambda}(\mathbb{R}^n)$.\\
	$(3)$ If $\vec{q_{_1}}\leq \vec{q_{_2}}$, then~$ M\dot{K}_{p,\vec{q}_2}^{\alpha+\sum\limits_{i=1}^n(\frac{1}{q_{1i}}-\frac{1}{q_{2i}}),\lambda}(\mathbb{R}^n) \subset  M\dot{K}_{p,\vec{q}_1}^{\alpha+\sum\limits_{i=1}^n(\frac{1}{q_{1i}}-\frac{1}{q_{2i}}),\lambda}(\mathbb{R}^n)$.
\end{proposition}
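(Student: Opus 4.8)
The plan is to reduce each of the three inclusions to an estimate, uniform in $k_0\in\mathbb{Z}$, for the inner quantity $S_{k_0}(f):=\bigl\{\sum_{k=-\infty}^{k_0}2^{k\alpha p}\|f\chi_k\|_{L^{\vec q}}^p\bigr\}^{1/p}$, which depends on the parameters $(\alpha,p,\vec q)$ involved: in every case the outer factor $2^{-k_0\lambda}$ and the supremum over $k_0$ are common to both sides, so once we know the target version of $S_{k_0}(f)$ is bounded by a constant times the source version for all $k_0$, the desired norm inequality follows by multiplying by $2^{-k_0\lambda}$ and taking the supremum. For $(1)$ this is just the nesting of discrete Lebesgue spaces: for $0<p_1\leq p_2<\infty$ and any nonnegative $(a_k)$ one has $(\sum_k a_k^{p_2})^{1/p_2}\leq(\sum_k a_k^{p_1})^{1/p_1}$; applying this with $a_k=2^{k\alpha}\|f\chi_k\|_{L^{\vec q}}$ and the sum restricted to $k\leq k_0$ gives $S_{k_0}^{(p_2)}(f)\leq S_{k_0}^{(p_1)}(f)$, hence $(1)$ with constant $1$, the mixed structure of $L^{\vec q}$ being irrelevant here.

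For $(3)$ the tool is Hölder's inequality in mixed Lebesgue spaces: if $1\leq\vec q_1\leq\vec q_2<\infty$ componentwise, then for any $g$ supported in a product set $I_1\times\cdots\times I_n$ one has $\|g\|_{L^{\vec q_1}}\leq\prod_{i=1}^n|I_i|^{\frac{1}{q_{1i}}-\frac{1}{q_{2i}}}\|g\|_{L^{\vec q_2}}$, which follows by applying the one-variable estimate $\|h\|_{L^{q_{1i}}(I_i)}\leq|I_i|^{\frac{1}{q_{1i}}-\frac{1}{q_{2i}}}\|h\|_{L^{q_{2i}}(I_i)}$ successively in $x_1,\dots,x_n$, the constant extracted at each step being independent of the remaining variables. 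Since $\operatorname{supp}(f\chi_k)\subset A_k\subset[-2^k,2^k]^n$, choosing $I_i=[-2^k,2^k]$ yields $\|f\chi_k\|_{L^{\vec q_1}}\lesssim 2^{\,k\sum_{i=1}^n(1/q_{1i}-1/q_{2i})}\|f\chi_k\|_{L^{\vec q_2}}$ with constant independent of $k$. Substituting this into $S_{k_0}$, the factor $2^{\,k\sum_i(1/q_{1i}-1/q_{2i})}$ merges with $2^{k\alpha}$ and produces precisely the shift of the Herz index appearing in $(3)$ when one passes from $\vec q_2$ to the smaller exponent vector $\vec q_1$; multiplying by $2^{-k_0\lambda}$ and taking $\sup_{k_0}$ completes it.

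The main obstacle is $(2)$. The naive route is again a pointwise-in-$k$ comparison, of $2^{k\alpha_2 p}\|f\chi_k\|_{L^{\vec q}}^p$ with $2^{k\alpha_1 p}\|f\chi_k\|_{L^{\vec q}}^p$; but $\alpha_2\leq\alpha_1$ gives $2^{k\alpha_2}\leq 2^{k\alpha_1}$ only on the annuli $A_k$ with $k\geq 0$, while for $k<0$ the inequality reverses, so $S_{k_0}^{(\alpha_2)}(f)\leq C\,S_{k_0}^{(\alpha_1)}(f)$ cannot be read off termwise. My plan is to split $\sum_{k=-\infty}^{k_0}=\sum_{0\leq k\leq k_0}+\sum_{k<0}$ (only the second sum survives when $k_0<0$): the first block is dominated by the corresponding block of $S_{k_0}^{(\alpha_1)}(f)$ immediately, and for the tail $\sum_{k<0}$ I would try to absorb the discrepancy $2^{k(\alpha_2-\alpha_1)p}$ into the outer factor $2^{-k_0\lambda p}$, exploiting the interplay between this tail and $2^{-k_0\lambda}$ that is characteristic of the Morrey-type supremum, rather than treating $\|\cdot\|_{M\dot{K}_{p,\vec q}^{\alpha,\lambda}}$ as a pure Herz norm. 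Making this absorption rigorous — and isolating the precise relation among $\alpha_1,\alpha_2$ and $\lambda$ that it needs — is the delicate step; everything else is routine bookkeeping.
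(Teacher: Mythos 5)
Parts $(1)$ and $(3)$ of your proposal are correct and follow essentially the paper's own route: $(1)$ is the nesting $\ell^{p_1}\hookrightarrow\ell^{p_2}$ applied to $a_k=2^{k\alpha}\|f\chi_k\|_{L^{\vec q}}$, and $(3)$ is H\"older's inequality on the cube $[-2^k,2^k]^n\supset A_k$, giving $\|f\chi_k\|_{L^{\vec q_1}}\leq C\,2^{k\sum_{i}(1/q_{1i}-1/q_{2i})}\|f\chi_k\|_{L^{\vec q_2}}$ and hence a shift of the Herz index. (One caveat: what this argument actually proves is $M\dot K^{\alpha+\sum_i(1/q_{1i}-1/q_{2i}),\lambda}_{p,\vec q_2}\subset M\dot K^{\alpha,\lambda}_{p,\vec q_1}$; the statement as printed carries the shift on \emph{both} sides, which matches neither your computation nor the paper's, and appears to be a typographical slip.)

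For $(2)$ your diagnosis of the obstacle is exactly right, but the absorption into $2^{-k_0\lambda}$ that you hope for does not exist: the inclusion $(2)$ is false as stated, so no splitting of the sum can rescue it. Concretely, take $\lambda>0$, $\delta:=\alpha_1-\alpha_2>0$, and a radial $f$ supported in $B_0$ with $\|f\chi_k\|_{L^{\vec q}}=2^{k(\lambda-\alpha_1)}$ for $k\leq 0$ (achieved by $f=\sum_{k\leq 0}c_k\chi_{A_k}$ with $c_k=2^{k(\lambda-\alpha_1)}\|\chi_{A_k}\|_{L^{\vec q}}^{-1}$). Then for every $k_0\leq 0$ one has $2^{-k_0\lambda p}\sum_{k\leq k_0}2^{k\alpha_1 p}\|f\chi_k\|_{L^{\vec q}}^{p}=2^{-k_0\lambda p}\sum_{k\leq k_0}2^{k\lambda p}=(1-2^{-\lambda p})^{-1}$, so $f\in M\dot K^{\alpha_1,\lambda}_{p,\vec q}$; but $2^{-k_0\lambda p}\sum_{k\leq k_0}2^{k\alpha_2 p}\|f\chi_k\|_{L^{\vec q}}^{p}=2^{-k_0\lambda p}\sum_{k\leq k_0}2^{k(\lambda-\delta)p}$, which is $+\infty$ when $\lambda\leq\delta$ and is comparable to $2^{-k_0\delta p}\to\infty$ as $k_0\to-\infty$ when $\lambda>\delta$; for $\lambda=0$ take instead $\|f\chi_k\|_{L^{\vec q}}=2^{-k\alpha_1}|k|^{-2/p}$ for $k<0$. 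The paper's one-line proof of $(2)$ writes $2^{k\alpha_2 p}=2^{k(\alpha_2-\alpha_1)p}2^{k\alpha_1 p}$ and then pulls the factor $2^{k(\alpha_2-\alpha_1)}$ outside the sum over $k$ as though it were a bounded constant; since $\alpha_2-\alpha_1\leq 0$ this factor is unbounded as $k\to-\infty$, so that step is invalid, and your refusal to accept the termwise comparison was the correct instinct. A true variant of $(2)$ would need an extra hypothesis forcing only the indices $k\geq 0$ to appear (e.g.\ $f$ supported in $\{|x|\geq 1\}$), or the reversed inequality $\alpha_1\leq\alpha_2$ for functions supported in the unit ball.
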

\begin{proof}
	(1)\quad First, note the obvious inequality, 
$
	\left(\sum\limits_{k=1}^{\infty}|a_k|\right)^r\leq \sum\limits_{k=1}^{\infty}|a_k|^r~ (0<r\leq 1 , a_k\in\mathbb{R}).
$\\
The following result can be concluded through this inequality
	\begin{align*}
	\|f\|_{M\dot{K}_{p_2,\vec{q}}^{\alpha,\lambda}}&=
	\mathop{sup}\limits_{k_0\in \mathbb{Z}}2^{-k_{0}\lambda}
	\left(\sum_{k=-\infty}^{k_0}\left(2^{k\alpha}\|f\chi_k\|_{\vec{q}}\right)^{p_2}\right)^{\frac{1}{p_1} \frac{p_1}{p_2}}\\
	&\leq\mathop{sup}\limits_{k_0\in \mathbb{Z}}2^{-k_{0}\lambda}
	 \left(\sum_{k=-\infty}^{k_0}\left(2^{k\alpha}\|f\chi_k\|_{\vec{q}}\right)^{p_1}\right)^{\frac{1}{p_1}} \leq \|f\|_{M\dot{K}_{p_1,\vec{q}}^{\alpha,\lambda}}.
	\end{align*}
	(2)
	\begin{align*}
||f||_{M\dot{K}_{p,\vec{q}}^{\alpha_2,\lambda}}
&=2^{k(\alpha_2-\alpha_1)}\mathop{sup}\limits_{k_0\in \mathbb{Z}}2^{-k_{0}\lambda}\left(\sum_{k=-\infty}^{k_0}2^{k\alpha_{1} p}||f\widetilde{\chi}_k||_{\vec{q}}^p\right)^{\frac{1}{p}}	
	\leq C ||f||_{M\dot{K}_{p,\vec{q}}^{\alpha_1,\lambda}}.
	\end{align*}
	(3) Using the H$\ddot{o}$lder inequality and condition (2),
	\begin{align*}
	\|f\|_{M\dot{K}_{p,\vec{q}_1}^{\alpha,\lambda}}&= 
	\mathop{sup}\limits_{k_0\in \mathbb{Z}}2^{-k_{0}\lambda}
	\left(\sum_{k=-\infty}^{k_0}2^{k\alpha p}\left(\int_{\mathbb{R}}\dots \left(\int_{\mathbb{R}}\left(\int_{\mathbb{R}}|f\chi_k(x)|^{q_{11}}dx_1\right)^{\frac{q_{12}}{q_{11}}}dx_2\right)^{\frac{q_{13}}{q_{12}}}\dots dx_n\right)^{\frac{p}{q_{1n}}}\right)^{\frac{1}{p}}	\\
    &\leq  \mathop{sup}\limits_{k_0\in \mathbb{Z}}2^{-k_{0}\lambda}
	\left(\sum_{k=-\infty}^{k_0}2^{k\alpha p}\left(\int_{2^{k-1}\leq|x_n|<2^k}\dots \left(\int_{2^{k-1}\leq|x_1|<2^k}|f(x)|^{q_{11}}dx_1\right)^{\frac{q_{12}}{q_{11}}}\dots dx_n\right)^{\frac{p}{q_{1n}}}\right)^{\frac{1}{p}}	\\
	&\leq \mathop{sup}\limits_{k\in \mathbb{Z}}2^{-k_{0}\lambda}
	\left(\sum_{k=-\infty}^{k_0}2^{k(\alpha+\sum\limits_{i=1}^{n}(\frac{1}{q_{1i}}-\frac{1}{q_{2i}}) p}\left(\int_{\mathbb{R}}\dots \left(\int_{\mathbb{R}}\left(\int_{\mathbb{R}}|f\chi_k(x)|^{q_{21}}dx_1\right)^{\frac{q_{22}}{q_{21}}}dx_2\right)^{\frac{q_{23}}{q_{22}}}\dots dx_n\right)^{\frac{p}{q_{2n}}}\right)^{\frac{1}{p}}	\\
	&\leq \|f\|_{M\dot{K}_{p,\vec{q}_2}^{\alpha+\sum\limits_{i=1}^n(\frac{1}{q_{1i}}-\frac{1}{q_{2i}}),\lambda}}.
	\end{align*}
	
\end{proof}
\section{Boundedness of Sublinear Operators and Fractional Type Operators in homogeneous mixed Herz-Morrey spaces}
In this section, the boundedness of the operators is obtained by decomposing the function.
\begin{theorem}\label{Th5.1}
	Let $0\leq \lambda <\infty$, $0<p< \infty$, $1<\vec{q}< \infty$ and $-\sum\limits_{i=1}^n \frac{1}{q_i} +\lambda<\alpha<n\left(1-\frac{1}{n}\sum\limits_{i=1}^n\frac{1}{q_i}\right),$ suppose  sublinear operators $T$ satisfied that\\
	$(1)$ $T$ is bounded on $L^{\vec{q}}\left(\mathbb{R}^n\right);$\\
	$(2)$ for suitable functions $f$ with $suppf\subseteq A_k$ and $|x|\geq2^{k+1}$ with $k\in \mathbb {Z},$
	\begin{equation}
	\mid Tf(x)\mid\leq C\frac{\|f\|_{L^1}}{|x|^n} ;\label{5.1}
	\end{equation}
	$(3)$ for suitable functions $f$ with $suppf\subseteq A_k$ and $|x|\leq 2^{k-2}$ with $k\in \mathbb {Z},$
	\begin{equation}
	\mid Tf(x)\mid\leq C\frac{\|f\|_{L^1}}{2^{nk}} .\label{5.2}
	\end{equation}
	Then $T$ is also bounded on $M\dot{K}_{p,\vec{q}}^{\alpha,\lambda}(\mathbb{R}^n).$
\end{theorem}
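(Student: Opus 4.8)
The plan is to run the classical annular decomposition of Herz-space theory while carefully tracking the supremum over $k_0$ that defines the Morrey norm. Fix $k_0\in\mathbb{Z}$; by definition of $\|\cdot\|_{M\dot{K}_{p,\vec{q}}^{\alpha,\lambda}}$ it suffices to bound $2^{-k_0\lambda}\bigl(\sum_{k\le k_0}2^{k\alpha p}\|(Tf)\chi_k\|_{L^{\vec{q}}}^p\bigr)^{1/p}$ by $C\|f\|_{M\dot{K}_{p,\vec{q}}^{\alpha,\lambda}}$. Decompose $f=\sum_{j\in\mathbb{Z}}f\chi_j$; sublinearity gives $|(Tf)\chi_k|\le\sum_{j\in\mathbb{Z}}|T(f\chi_j)|\,\chi_k$, and I split the $j$-sum into the near part $|j-k|\le 1$, the inner part $j\le k-2$, and the outer part $j\ge k+2$, writing the three resulting contributions to $2^{k\alpha}\|(Tf)\chi_k\|_{L^{\vec{q}}}$ as $U_1(k),U_2(k),U_3(k)$.

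For $U_2$ there are only three terms, $2^{j\alpha}\sim 2^{k\alpha}$ on them, and hypothesis $(1)$ gives $\|T(f\chi_j)\chi_k\|_{L^{\vec q}}\le\|T(f\chi_j)\|_{L^{\vec q}}\le C\|f\chi_j\|_{L^{\vec q}}$, so the $\ell^p(k\le k_0)$-norm of $2^{-k_0\lambda}U_2$ is immediately at most $C\|f\|_{M\dot{K}_{p,\vec{q}}^{\alpha,\lambda}}$. For $U_1$ note that $j\le k-2$ forces $|x|\ge 2^{k-1}\ge 2^{j+1}$ on $A_k$, so the decay hypothesis $(2)$ applies; for $U_3$ the inequality $j\ge k+2$ forces $|x|\le 2^k\le 2^{j-2}$ on $A_k$, so $(3)$ applies. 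In both cases I convert $\|f\chi_j\|_{L^1}$ by the mixed-norm Hölder inequality $\|f\chi_j\|_{L^1}\le\|f\chi_j\|_{L^{\vec q}}\|\chi_{A_j}\|_{L^{\vec q'}}$, and use the elementary size estimates $\|\chi_{A_j}\|_{L^{\vec q'}}\sim 2^{j(n-\sum_{i=1}^{n}1/q_i)}$ and $\|\chi_{A_k}\|_{L^{\vec q}}\sim 2^{k\sum_{i=1}^{n}1/q_i}$. Writing $a_j:=2^{j\alpha}\|f\chi_j\|_{L^{\vec q}}$, these inputs yield
\[
2^{k\alpha}\|T(f\chi_j)\chi_k\|_{L^{\vec q}}\le C\,
\begin{cases}
2^{(k-j)(\alpha-n+\sum_{i=1}^{n}1/q_i)}\,a_j, & j\le k-2,\\[3pt]
2^{(k-j)(\alpha+\sum_{i=1}^{n}1/q_i)}\,a_j, & j\ge k+2.
\end{cases}
\]

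It remains to sum in $j$ and take the $\ell^p(k\le k_0)$-norm. The $U_1$ kernel is $2^{(k-j)\delta_1}$ with $\delta_1=\alpha-n+\sum_{i=1}^{n}1/q_i<0$ precisely by the upper restriction $\alpha<n-\sum_{i=1}^{n}1/q_i$; a Minkowski-inequality argument (for $p\ge1$), or the subadditivity $(\sum|x_i|)^p\le\sum|x_i|^p$ (for $0<p<1$), collapses $\sum_{j\le k-2}$ against this summable kernel and leaves $2^{-k_0\lambda}\bigl(\sum_{l\le k_0}a_l^p\bigr)^{1/p}\le\|f\|_{M\dot{K}_{p,\vec{q}}^{\alpha,\lambda}}$. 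For $U_3$ the kernel is $2^{(k-j)\delta_2}$ with $\delta_2=\alpha+\sum_{i=1}^{n}1/q_i>0$, and I split $\sum_{j\ge k+2}=\sum_{k+2\le j\le k_0}+\sum_{j>k_0}$; the first part is handled exactly like $U_1$. The term $\sum_{j>k_0}$ is the genuinely Morrey-flavoured one and is the main obstacle, because those indices are invisible to the truncated sum at level $k_0$: there I instead invoke the definition of the norm at level $j$ to get $a_j\le 2^{j\lambda}\|f\|_{M\dot{K}_{p,\vec{q}}^{\alpha,\lambda}}$, whence $\sum_{j>k_0}2^{(k-j)\delta_2}a_j\le C\,2^{k\delta_2}2^{k_0(\lambda-\delta_2)}\|f\|_{M\dot{K}_{p,\vec{q}}^{\alpha,\lambda}}$, the geometric series over $j>k_0$ converging exactly because $\lambda<\delta_2$, i.e. because of the lower restriction $\alpha>\lambda-\sum_{i=1}^{n}1/q_i$. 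Taking the $\ell^p(k\le k_0)$-norm (convergent since $\delta_2>0$) turns the $2^{k\delta_2}$ factor into $2^{k_0\delta_2}$, so after multiplying by $2^{-k_0\lambda}$ every power of $2^{k_0}$ cancels and only $C\|f\|_{M\dot{K}_{p,\vec{q}}^{\alpha,\lambda}}$ survives. Adding the three contributions and taking the supremum over $k_0$ completes the proof; the only ingredients beyond the stated hypotheses are mixed-norm Hölder and the two characteristic-function estimates, which I would record beforehand as a preliminary lemma.
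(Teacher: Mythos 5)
Your proposal is correct and follows essentially the same route as the paper: the same three-way annular decomposition, boundedness on $L^{\vec q}$ for the diagonal terms, the pointwise decay hypotheses plus mixed-norm H\"older and the estimates $\|\chi_{A_j}\|_{L^{\vec q\,'}}\sim 2^{j(n-\sum_i 1/q_i)}$, $\|\chi_{A_k}\|_{L^{\vec q}}\sim 2^{k\sum_i 1/q_i}$ for the off-diagonal terms, the split of the outer sum at $j=k_0$, and the bound $2^{j\alpha}\|f\chi_j\|_{L^{\vec q}}\le 2^{j\lambda}\|f\|_{M\dot K^{\alpha,\lambda}_{p,\vec q}}$ for the tail. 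One small improvement worth noting: by keeping the factor $2^{k\delta_2}$ and summing $\sum_{k\le k_0}2^{k\delta_2 p}\sim 2^{k_0\delta_2 p}$ (with $\delta_2=\alpha+\sum_i 1/q_i>0$) instead of reducing to $\sum_{k\le k_0}2^{k\lambda p}$ as the paper does, your tail estimate also covers $\lambda=0$ directly, whereas the paper must exclude that case by a separate remark.
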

\begin{proof}
Assuming	$f\in M\dot{K}_{p,\vec{q}}^{\alpha,\lambda}(\mathbb{R}^n),$   $$f(x)=\sum_{j=-\infty}^{\infty}f(x)\chi_{j}(x)=\sum_{j=-\infty}^{\infty} f _{j}(x).$$
	$$
	\begin{aligned}
	\|Tf\|_{M\dot{K}_{p,\vec{q}}^{\alpha,\lambda}}
	&\leq\mathop{sup}\limits_{k_{0}\in \mathbb{Z}} 2^{-k_{0}\lambda} \left\{\sum_{k=-\infty}^{k_{0}}2^{k \alpha p}
	\left\| \left[\sum ^{\infty}_{j=-\infty}\left(Tf_{j}(\cdot)\right)\right]\chi_{k}(\cdot) \right\| _{L^{\vec{q}}}^p\right\}^{\frac{1}{p}}\\
	\end{aligned}
	$$
	$$
	\begin{aligned}
	\quad\quad\quad\quad\quad\quad
	&\leq\mathop{sup}\limits_{k_{0}\in \mathbb{Z}} 2^{-k_{0}\lambda} \left\{\sum_{k=-\infty}^{k_{0}}2^{k \alpha p}
	\left\| \left[\sum ^{\infty}_{j=-\infty}\left|Tf_{j}(\cdot)\right|\right]\chi_{k}(\cdot) \right\| _{L^{\vec{q}}}^p\right\}^{\frac{1}{p}}\\
	&\leq C\mathop{sup}\limits_{k_{0}\in \mathbb{Z}} 2^{-k_{0}\lambda} \left\{\sum_{k=-\infty}^{k_{0}}2^{k \alpha p}
	\left\| \left[\sum ^{k-2}_{j=-\infty}\left|Tf_{j}(\cdot)\right|\right]\chi_{k}(\cdot) \right\| _{L^{\vec{q}}}^p\right\}^{\frac{1}{p}}\\
				\end{aligned}
	$$
	$$
	\begin{aligned}
\quad \quad \quad~ \quad~\quad~\quad~	&\quad +C\mathop{sup}\limits_{k_{0}\in \mathbb{Z}} 2^{-k_{0}\lambda} \left\{\sum_{k=-\infty}^{k_{0}}2^{k \alpha p}
	\left\| \left[\sum ^{k+1}_{j=k-1}\left|Tf_{j}(\cdot)\right|\right]\chi_{k}(\cdot) \right\| _{L^{\vec{q}}}^p\right\}^{\frac{1}{p}}\\
	&\quad +C\mathop{sup}\limits_{k_{0}\in \mathbb{Z}} 2^{-k_{0}\lambda} \left\{\sum_{k=-\infty}^{k_{0}}2^{k \alpha p}
	\left\| \left[\sum ^{\infty}_{j=k+2}\left|Tf_{j}(\cdot)\right|\right]\chi_{k}(\cdot) \right\| _{L^{\vec{q}}}^p\right\}^{\frac{1}{p}}\\
	&:=C\left(E_1+E_2+E_3\right).
	\end{aligned}
	$$
	For $E_{2}$, $T$ is bounded on  $L^{\overrightarrow{p}}(\mathbb{R}^{n})$, \\
	$$
	\begin{aligned}
	E_2&\leq C\mathop{sup}\limits_{k_{0}\in \mathbb{Z}} 2^{-k_{0}\lambda} \left\{\sum_{k=-\infty}^{k_{0}}2^{k \alpha p}
	\sum ^{k+1}_{j=k-1} \left\|\left|Tf_{j}(\cdot)\right|\right\| _{L^{\vec{q}}}^p\right\}^{\frac{1}{p}}\\
	&\leq C\mathop{sup}\limits_{k_{0}\in \mathbb{Z}} 2^{-k_{0}\lambda} \left\{\sum_{k=-\infty}^{k_{0}}2^{k \alpha p}
	\sum ^{k+1}_{j=k-1} \left\|\left|f_{j}(\cdot)\right| \right\| _{L^{\vec{q}}}^p\right\}^{\frac{1}{p}}\\
	&\leq C\mathop{sup}\limits_{k_{0}\in \mathbb{Z}} 2^{-k_{0}\lambda} \left\{\sum_{k=-\infty}^{k_{0}}2^{k \alpha p}
	\left\|\left|f_{k}(\cdot)\right| \right\| _{L^{\vec{q}}}^p\right\}^{\frac{1}{p}}\\
	&\leq C\|f\|_{M\dot{K}_{p,\vec{q}}^{\alpha,\lambda}}.
	\end{aligned}
	$$
For $E_{1}$,  $j\leq k-2$ and $x\in A_k,$  from condition \eqref{5.1}, 
$$
|T(f_{j}(x)\chi_{k}(x))|\leq C 2^{-kn} \|f_{j}\|_{L^{1}(\mathbb{R}^{n})}
,$$
case 1: $0<p \leq 1$
$$
\begin{aligned}
E_1&\leq C\mathop{sup}\limits_{k_{0}\in \mathbb{Z}} 2^{-k_{0}\lambda} \left\{\sum_{k=-\infty}^{k_{0}}2^{k \alpha p}
\left\|\left[\sum ^{k-2}_{j=-\infty} \| f_{j}\|_{L^{1}} 2^{-kn}\right] \chi_{k}(x) \right\| _{L^{\vec{q}}}^p\right\}^{\frac{1}{p}}\\
&= C\mathop{sup}\limits_{k_{0}\in \mathbb{Z}} 2^{-k_{0}\lambda} \left\{\sum_{k=-\infty}^{k_{0}}2^{k \alpha p}
\left(\sum ^{k-2}_{j=-\infty} \left\|f_{j} \right\| _{L^{1}}|2^{-kn}| \|\chi_{k}\|_{L^{\vec{q}}} \right)^{p}\right\}^{\frac{1}{p}}\\
&\leq C\mathop{sup}\limits_{k_{0}\in \mathbb{Z}} 2^{-k_{0}\lambda} \left\{\sum_{k=-\infty}^{k_{0}}2^{k \alpha p}
\left(\sum ^{k-2}_{j=-\infty} \left\|f_{j} \right\| _{L^{1}}\right)^{p} 2^{-knp} 2^{knp\left( \frac{1}{n} \sum^{n}_{i=1}\frac{1}{q_{i}} \right)} \right\}^{\frac{1}{p}}\\
\end{aligned}
$$
$$
\begin{aligned}
\quad \quad
&\leq C\mathop{sup}\limits_{k_{0}\in \mathbb{Z}} 2^{-k_{0}\lambda} \left\{\sum_{k=-\infty}^{k_{0}}2^{k \alpha p}
\left(\sum ^{k-2}_{j=-\infty} \left\|f_{j} \right\| _{L^{\vec{q}}}\right)^{p}  2^{(k-j)np\left( \frac{1}{n} \sum^{n}_{i=1}\frac{1}{q_{i}} -1 \right)} \right\}^{\frac{1}{p}}\\
&\leq C\mathop{sup}\limits_{k_{0}\in \mathbb{Z}} 2^{-k_{0}\lambda} \left\{\sum_{k=-\infty}^{k_{0}}2^{k \alpha p}
\left\|f_{j} \right\| _{L^{\vec{q}}}^{p} \right\}^{\frac{1}{p}}\\
&\leq C\|f\|_{M\dot{K}_{p,\vec{q}}^{\alpha,\lambda}},
\end{aligned}
$$
case 2: $1<p <\infty $
	$$
\begin{aligned}
E_1&\leq C\mathop{sup}\limits_{k_{0}\in \mathbb{Z}} 2^{-k_{0}\lambda} \left\{\sum_{k=-\infty}^{k_{0}}2^{k \alpha p}
\left[\sum ^{k-2}_{j=-\infty} \left\| f_{j}\right\|_{L^{1}} |2^{-kn}| \right]^{p} \left\|\chi_{k}\right\|  _{L^{\vec{q}}}^p\right\}^{\frac{1}{p}}\\
&\leq C\mathop{sup}\limits_{k_{0}\in \mathbb{Z}} 2^{-k_{0}\lambda} \left\{\sum_{k=-\infty}^{k_{0}}2^{k \alpha p}
\left(\sum ^{k-2}_{j=-\infty} \left\|f_{j} \right\| _{L^{1}} \right)^{p} 2^{-knp} 2^{knp\left( \frac{1}{n} \sum^{n}_{i=1}\frac{1}{q_{i}} \right)} \right\}^{\frac{1}{p}}\\
&\leq  C\mathop{sup}\limits_{k_{0}\in \mathbb{Z}} 2^{-k_{0}\lambda} \left\{\sum_{k=-\infty}^{k_{0}}2^{k \alpha p}\left(\sum_{j=-\infty}^{k-2} \|f\chi_j\|_{L^{\vec{q}}}\right)^p 2^{(k-j)np\left(\frac{1}{n}\sum\limits_{i=1}^n\frac{1}{q_i}-1\right)}\right\}^{\frac{1}{p}}\\
&\leq  C\mathop{sup}\limits_{k_{0}\in \mathbb{Z}} 2^{-k_{0}\lambda} \left\{\sum_{k=-\infty}^{k_{0}}2^{k \alpha p}\left(\sum_{j=-\infty}^{k-2} \|f\chi_j\|_{L^{\vec{q}}} 2^{(k-j)n\left(\frac{1}{n}\sum\limits_{i=1}^n\frac{1}{q_i}-1\right)\frac{1}{2}\times2}\right)^p\right\}^{\frac{1}{p}}\\
&\leq  C\mathop{sup}\limits_{k_{0}\in \mathbb{Z}} 2^{-k_{0}\lambda} \Bigg\{\sum_{k=-\infty}^{k_{0}}2^{k \alpha p}\sum_{j=-\infty}^{k-2} \|f\chi_j\|_{L^{\vec{q}}}^p 2^{(k-j)np\frac{1}{2}\left(\frac{1}{n}\sum\limits_{i=1}^n\frac{1}{q_i}-1\right)}\\
&\quad \quad \quad \times\left(\sum_{j=-\infty}^{k-2}2^{(k-j)np^{\prime}\frac{1}{2}\left(\frac{1}{n}\sum\limits_{j=1}^n\frac{1}{q_i}-1\right)}\right)^\frac{p}{p^{\prime}} \Bigg\}^{\frac{1}{p}}\\
&\leq  C\mathop{sup}\limits_{k_{0}\in \mathbb{Z}} 2^{-k_{0}\lambda} \left\{\sum_{k=-\infty}^{k_{0}}2^{j \alpha p}\|f\chi_j\|_{L^{\vec{q}}}^p\sum_{k=j+2}^{\infty}  2^{(k-j)\left(\alpha -np\frac{1}{2}\left(\frac{1}{n}\sum\limits_{i=1}^n\frac{1}{q_i}-1\right)\right)}\right\}^{\frac{1}{p}}\\
&\leq C\|f\|_{M\dot{K}_{p,\vec{q}}^{\alpha,\lambda}}.
\end{aligned}
$$
That is also why $\alpha<n\left(1-\frac{1}{n}\sum\limits_{i=1}^n\frac{1}{q_i}\right). $
$E_3$ uses the same way, we know $j\geq k+2$ and $x\in A_k,$  through the condition \eqref{5.2}, 
$$
\mid Tf(x)\mid\leq C\frac{\|f\|_{L^1}}{2^{nk}}.
$$
$$
\begin{aligned}
E_3&\leq C\mathop{sup}\limits_{k_{0}\in \mathbb{Z}} 2^{-k_{0}\lambda} \left\{\sum_{k=-\infty}^{k_{0}}2^{k \alpha p}
\left\|\left[\sum ^{\infty}_{j=k+2} \| f_{j}\|_{L^{1}} 2^{-jn}\right] \chi_{k}(x) \right\| _{L^{\vec{q}}}^p\right\}^{\frac{1}{p}}\\
&\leq C\mathop{sup}\limits_{k_{0}\in \mathbb{Z}} 2^{-k_{0}\lambda} \left\{\sum_{k=-\infty}^{k_{0}}2^{k \alpha p}
\left(\sum ^{\infty}_{j=k+2} \left\|f_{j} \right\| _{L^{\vec{q}}}\right)^{p} 2^{(k-j)p\left( \sum^{n}_{i=1}\frac{1}{q_{i}} \right)} \right\}^{\frac{1}{p}},\\
\end{aligned}
$$
case 1: $0<p\leq 1$
$$
\begin{aligned}
E_3&\leq C\mathop{sup}\limits_{k_{0}\in \mathbb{Z}} 2^{-k_{0}\lambda} \left\{\sum_{k=-\infty}^{k_{0}}2^{k \alpha p}
\sum ^{\infty}_{j=k+2} \left\|f_{j} \right\| _{L^{\vec{q}}}^{p} 2^{(k-j)p\left( \sum^{n}_{i=1}\frac{1}{q_{i}} \right)} \right\}^{\frac{1}{p}}\\
&\leq C\mathop{sup}\limits_{k_{0}\in \mathbb{Z}} 2^{-k_{0}\lambda} \left\{\sum_{k=-\infty}^{k_{0}}2^{k \alpha p}
\sum ^{k_{0}}_{j=k+2} \left\|f_{j} \right\| _{L^{\vec{q}}}^{p} 2^{(k-j)p\left( \sum^{n}_{i=1}\frac{1}{q_{i}} \right)} \right\}^{\frac{1}{p}}\\
  &\quad \quad \quad
+C\mathop{sup}\limits_{k_{0}\in \mathbb{Z}} 2^{-k_{0}\lambda} \left\{\sum_{k=-\infty}^{k_{0}}2^{k \alpha p}
\sum ^{\infty}_{j=k_{0}+1} \left\|f_{j} \right\| _{L^{\vec{q}}}^{p} 2^{(k-j)p\left( \sum^{n}_{i=1}\frac{1}{q_{i}} \right)} \right\}^{\frac{1}{p}}\\
&\leq C\mathop{sup}\limits_{k_{0}\in \mathbb{Z}} 2^{-k_{0}\lambda} \left\{\sum_{j=-\infty}^{k_{0}}2^{j \alpha p}
\left\|f_{j} \right\| _{L^{\vec{q}}}^{p} \sum ^{j-2}_{k=-\infty}
2^{(k-j)p\left( \alpha + \sum^{n}_{i=1}\frac{1}{q_{i}} \right)} \right\}^{\frac{1}{p}}\\
&\quad \quad \quad
+C\mathop{sup}\limits_{k_{0}\in \mathbb{Z}} 2^{-k_{0}\lambda} \left\{\sum_{k=-\infty}^{k_{0}}2^{k \alpha p}
\sum ^{\infty}_{j=k_{0}+1} 2^{(k-j)p\left( \sum^{n}_{i=1}\frac{1}{q_{i}} \right)} 2^{-j\alpha p} \sum ^{j}_{l=-\infty} 2^{l\alpha p}\left\|f_{l} \right\| _{L^{\vec{q}}}^{p} \right\}^{\frac{1}{p}}\\
&\leq C\|f\|_{M\dot{K}_{p,\vec{q}}^{\alpha,\lambda}}+C\mathop{sup}\limits_{k_{0}\in \mathbb{Z}} 2^{-k_{0}\lambda} \left\{\sum_{k=-\infty}^{k_{0}}2^{k \alpha p}
\sum ^{\infty}_{j=k_{0}+1} 2^{(k-j)p\left( \sum^{n}_{i=1}\frac{1}{q_{i}} \right)} 2^{-j\alpha p} 2^{j\lambda p}\right\}^{\frac{1}{p}}\\
&\quad\quad\quad\quad\quad\quad\quad\quad\quad\quad\quad\quad
\times\left\{\left[2^{-j\lambda}
\left( \sum ^{j}_{l=-\infty} 2^{l\alpha p} \left\|f_{l} \right\| _{L^{\vec{q}}}^{p}  \right]^{p} \right)^{\frac{1}{p}}  \right\}^{\frac{1}{p}}\\
&\leq C\|f\|_{M\dot{K}_{p,\vec{q}}^{\alpha,\lambda}}+C\|f\|_{M\dot{K}_{p,\vec{q}}^{\alpha,\lambda}}
\mathop{sup}\limits_{k_{0}\in \mathbb{Z}} 2^{-k_{0}\lambda} \left\{\sum_{k=-\infty}^{k_{0}}2^{k \alpha p}
\sum ^{\infty}_{j=k_{0}+1} 2^{(k-j)p\left( \sum^{n}_{i=1}\frac{1}{q_{i}} \right)} 2^{-j\alpha p} 2^{j\lambda p}\right\}^{\frac{1}{p}}\\
&\leq C\|f\|_{M\dot{K}_{p,\vec{q}}^{\alpha,\lambda}},
\end{aligned}
$$
case 2: $1<p<\infty $
$$
\begin{aligned}
E_3&\leq C\mathop{sup}\limits_{k_{0}\in \mathbb{Z}} 2^{-k_{0}\lambda} \left\{\sum_{k=-\infty}^{k_{0}}2^{k \alpha p}
\left(\sum ^{k_{0}}_{j=k+2} \left\|f_{j} \right\| _{L^{\vec{q}}} 2^{(k-j) \sum^{n}_{i=1}\frac{1}{q_{i}} }\right)^{p} \right\}^{\frac{1}{p}}\\
&\quad \quad \quad
+C\mathop{sup}\limits_{k_{0}\in \mathbb{Z}} 2^{-k_{0}\lambda} \left\{\sum_{k=-\infty}^{k_{0}}2^{k \alpha p}
\left(\sum ^{\infty}_{j=k_{0}+1} \left\|f_{j} \right\| _{L^{\vec{q}}} 2^{(k-j)\sum^{n}_{i=1}\frac{1}{q_{i}}}\right)^{p} \right\}^{\frac{1}{p}}\\
&:=C(F_{1}+F_{2}).
\end{aligned}
$$
For $F_1$,
$$
\begin{aligned}
F_1&\leq C\mathop{sup}\limits_{k_{0}\in \mathbb{Z}} 2^{-k_{0}\lambda} \left\{\sum_{k=-\infty}^{k_{0}}
\left(\sum ^{k_{0}}_{j=k+2} 2^{j\alpha p} \left\|f_{j} \right\| _{L^{\vec{q}}}^{p} 2^{(k-j)\left( \alpha+ \sum^{n}_{i=1}\frac{1}{q_{i}}\right)p\frac{1}{2} }\right) \right\}^{\frac{1}{p}}
\left\{\left[\sum ^{k_{0}}_{j=k+2} 2^{(k-j)\left( \alpha+ \sum^{n}_{i=1}\frac{1}{q_{i}}\right)p\frac{1}{2}}  \right]^{\frac{p}{p'}}  \right\}^{\frac{1}{p}}\\
\end{aligned}
$$
$
~~~\leq C\mathop{sup}\limits_{k_{0}\in \mathbb{Z}} 2^{-k_{0}\lambda} \left\{\sum_{j=-\infty}^{k_{0}} 2^{j\alpha p}
\left\|f_{j} \right\| _{L^{\vec{q}}}^{p}
\sum ^{j-2}_{k=-\infty}   2^{(k-j)\left( \alpha+ \sum^{n}_{i=1}\frac{1}{q_{i}}\right)p\frac{1}{2} } \right\}^{\frac{1}{p}}\\
~~~~\leq C\mathop{sup}\limits_{k_{0}\in \mathbb{Z}} 2^{-k_{0}\lambda} \left\{\sum_{j=-\infty}^{k_{0}} 2^{j\alpha p}
\left\|f_{j} \right\| _{L^{\vec{q}}}^{p} \right\}^{\frac{1}{p}}\\
~~~~\leq C\|f\|_{M\dot{K}_{p,\vec{q}}^{\alpha,\lambda}}.
$\\
For $F_2$, according to the H$\ddot{o}$lder inequality and  $\left\| f_{j} \right\|_{L^{\vec{q}}} \leq 2^{-j\alpha p}\sum^{j}_{l=-\infty} 2^{l\alpha p}\left\| f_{l} \right\|_{L^{\vec{q}}}$,
$$
\begin{aligned}
F_2&= C\mathop{sup}\limits_{k_{0}\in \mathbb{Z}} 2^{-k_{0}\lambda} \left\{\sum_{k=-\infty}^{k_{0}}
\left[\sum ^{\infty}_{j=k_{0}+1} 2^{j\alpha } \left\|f_{j} \right\| _{L^{\vec{q}}} 2^{\frac{(k-j)\left( \alpha+ \sum^{n}_{i=1}\frac{1}{q_{i}}+ \lambda\right)}{2}}  2^{(k-j)\left( \alpha+ \sum^{n}_{i=1}\frac{1}{q_{i}}-\lambda\right)} \right]^{p} \right\}^{\frac{1}{p}}\\
&\leq C\mathop{sup}\limits_{k_{0}\in \mathbb{Z}} 2^{-k_{0}\lambda} \left\{\sum_{k=-\infty}^{k_{0}}
\sum ^{\infty}_{j=k_{0}+1} 2^{j\alpha p} \left\|f_{j} \right\| _{L^{\vec{q}}}^{p} 2^{\frac{(k-j)\left( \alpha+ \sum^{n}_{i=1}\frac{1}{q_{i}}+ \lambda\right)p}{2}}   \right\}^{\frac{1}{p}}\\
&\quad\quad\quad\quad\quad\quad
\times\left\{\left[\sum ^{\infty}_{j=k_{0}+1} 2^{\frac{(k-j)\left( \alpha+ \sum^{n}_{i=1}\frac{1}{q_{i}} -\lambda \right)p'}{2}}  \right]^{\frac{p}{p'}}  \right\}^{\frac{1}{p}}\\
&\leq C\mathop{sup}\limits_{k_{0}\in \mathbb{Z}} 2^{-k_{0}\lambda} \left\{\sum_{k=-\infty}^{k_{0}}
\sum ^{\infty}_{j=k_{0}+1} 2^{j\alpha p} \left\|f_{j} \right\| _{L^{\vec{q}}}^{p} 2^{\frac{(k-j)\left( \alpha+ \sum^{n}_{i=1}\frac{1}{q_{i}}+ \lambda\right)p}{2}}   \right\}^{\frac{1}{p}}\\
&\leq C\mathop{sup}\limits_{k_{0}\in \mathbb{Z}} 2^{-k_{0}\lambda} \left\{\sum_{k=-\infty}^{k_{0}}
\sum ^{\infty}_{j=k_{0}+1} 2^{j\alpha p}  2^{\frac{(k-j)\left( \alpha+ \sum^{n}_{i=1}\frac{1}{q_{i}}+ \lambda\right)p}{2}}
\sum^{j}_{l=-\infty} 2^{l\alpha p} \left\|f_{l} \right\| _{L^{\vec{q}}}^{p}
\right\}^{\frac{1}{p}}\\
&= C\mathop{sup}\limits_{k_{0}\in \mathbb{Z}} 2^{-k_{0}\lambda} \left\{\sum_{k=-\infty}^{k_{0}}
\sum ^{\infty}_{j=k_{0}+1} 2^{j\alpha p}  2^{\frac{(k-j)\left( \alpha+ \sum^{n}_{i=1}\frac{1}{q_{i}}+ \lambda\right)p}{2}}
2^{j\lambda p} \left[  2^{-j\lambda} \left(\sum^{j}_{l=-\infty} 2^{l\alpha p} \left\|f_{l} \right\| _{L^{\vec{q}}}^{p} \right)^{\frac{1}{p}} \right]^{p}\right\}^{\frac{1}{p}}\\
&= C\|f\|_{M\dot{K}_{p,\vec{q}}^{\alpha,\lambda}} \mathop{sup}\limits_{k_{0}\in \mathbb{Z}} 2^{-k_{0}\lambda} \left\{\sum_{k=-\infty}^{k_{0}}2^{k\lambda p}
\sum ^{\infty}_{j=k_{0}+1} 2^{\frac{(k-j)\left( \alpha+ \sum^{n}_{i=1}\frac{1}{q_{i}}-\lambda\right)p}{2}}
\right\}^{\frac{1}{p}}\\
&\leq C\|f\|_{M\dot{K}_{p,\vec{q}}^{\alpha,\lambda}} \mathop{sup}\limits_{k_{0}\in \mathbb{Z}} 2^{-k_{0}\lambda} \left\{\sum_{k=-\infty}^{k_{0}}2^{k\lambda p}
\right\}^{\frac{1}{p}}\\
&\leq C\|f\|_{M\dot{K}_{p,\vec{q}}^{\alpha,\lambda}}.
\end{aligned}
$$
That is also why $-\sum\limits_{i=1}^n \frac{1}{q_i} +\lambda<\alpha $, 
$$
E_{3}\leq F_{1}+F_{2}\leq C\|f\|_{M\dot{K}_{p,\vec{q}}^{\alpha,\lambda}}.
$$
All in all
$$
\| Tf\|_{M\dot{K}_{p,\vec{q}}^{\alpha,\lambda}(\mathbb{R}^{n})}\leq C\| f\|_{M\dot{K}_{p,\vec{q}}^{\alpha,\lambda}(\mathbb{R}^{n})}.
$$
This completes the proof of the theorem.
\end{proof}

\begin{remark}
	In the above proof, in order to ensure the convergence of series  $\sum_{k=-\infty}^{k_{0}}2^{k\lambda p} <\infty$, $ \lambda > 0$ is required, but due to $ M\dot{K}_{p,\vec{q}}^{\alpha,0}(\mathbb{R}^n)=\dot{K}_{\vec{q}}^{\alpha, p}(\mathbb{R}^n)$, the result of the theorem in $\dot{K}_{\vec{q}}^{\alpha, p}(\mathbb{R}^n)$   \cite{MW2021}. As proved in, similar situations encountered later will not be explained.
\end{remark}

\begin{corollary}
	Let $0\leq \lambda <\infty$, $0<p< \infty$, $1<\vec{q}< \infty$ and $-\sum\limits_{i=1}^n \frac{1}{q_i} +\lambda<\alpha<n\left(1-\frac{1}{n}\sum\limits_{i=1}^n\frac{1}{q_i}\right)$. If sublinear operators $T$ satisfied the following condition\\
	\begin{equation}
	\left|Tf(x)\right|\leq C\int_{\mathbb{R}}\frac{\left|f(x)\right|}{\left|x-y\right|^n}dy,\quad \quad f\in L^1(\mathbb{R}^n) \mbox{ with compact support}\quad x\notin suppf,\label{7}
	\end{equation}
	and if T is bounded on $L^{\vec{q}}\left(\mathbb{R}^n\right)$, then T is also bounded on $M\dot{K}_{p,\vec{q}}^{\alpha,\lambda}(\mathbb{R}^n)$.
\end{corollary}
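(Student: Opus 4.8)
The plan is to deduce the Corollary directly from Theorem~\ref{Th5.1} by verifying that the single pointwise hypothesis \eqref{7}, together with the assumed $L^{\vec q}$-boundedness, already forces the two kernel-type size conditions \eqref{5.1} and \eqref{5.2} of that theorem. Condition $(1)$ of Theorem~\ref{Th5.1} is literally the hypothesis that $T$ is bounded on $L^{\vec q}(\mathbb R^n)$, so nothing has to be done there; it remains only to check $(2)$ and $(3)$, after which Theorem~\ref{Th5.1} applies verbatim under the stated restrictions $0\le\lambda<\infty$, $0<p<\infty$, $1<\vec q<\infty$, $-\sum_{i=1}^n\frac1{q_i}+\lambda<\alpha<n\bigl(1-\frac1n\sum_{i=1}^n\frac1{q_i}\bigr)$.

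First I would verify \eqref{5.1}. Fix $k\in\mathbb Z$ and $f$ with $\operatorname{supp} f\subseteq A_k$, and take $x$ with $|x|\ge 2^{k+1}$; then $x\notin\operatorname{supp} f$, so \eqref{7} is legitimately applicable. For $y\in A_k$ one has $|y|\le 2^k\le\tfrac12|x|$, hence $|x-y|\ge|x|-|y|\ge\tfrac12|x|$, and substituting into \eqref{7} gives $|Tf(x)|\le C\int_{A_k}\frac{|f(y)|}{|x-y|^n}\,dy\le C\,2^n\,\frac{\|f\|_{L^1}}{|x|^n}$, which is exactly \eqref{5.1}. Next I would verify \eqref{5.2}: again fix $k\in\mathbb Z$ and $f$ with $\operatorname{supp} f\subseteq A_k$, and take $x$ with $|x|\le 2^{k-2}$, so once more $x\notin\operatorname{supp} f$ and \eqref{7} applies. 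For $y\in A_k$ we have $|y|\ge 2^{k-1}$, so $|x-y|\ge|y|-|x|\ge 2^{k-1}-2^{k-2}=2^{k-2}$, whence $|Tf(x)|\le C\int_{A_k}\frac{|f(y)|}{|x-y|^n}\,dy\le C\,2^{2n}\,\frac{\|f\|_{L^1}}{2^{nk}}$, which is \eqref{5.2}.

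With conditions $(1)$–$(3)$ of Theorem~\ref{Th5.1} now established, Theorem~\ref{Th5.1} immediately yields $\|Tf\|_{M\dot K_{p,\vec q}^{\alpha,\lambda}(\mathbb R^n)}\le C\|f\|_{M\dot K_{p,\vec q}^{\alpha,\lambda}(\mathbb R^n)}$, completing the proof. There is no real obstacle here: the only points needing any care are the two elementary triangle-inequality estimates converting $|x-y|$ into a power of $|x|$ (when $x$ is far outside the annulus containing $\operatorname{supp} f$) or into $2^k$ (when $x$ is well inside it), and the observation that in both regimes $x\notin\operatorname{supp} f$, so that \eqref{7} may be invoked; all of the genuine analytic work is already contained in Theorem~\ref{Th5.1}.
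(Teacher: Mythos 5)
Your proposal is correct and is exactly the intended derivation: the paper states this corollary without proof because, as you show, the pointwise condition \eqref{7} combined with the elementary triangle-inequality estimates $|x-y|\ge\tfrac12|x|$ (for $|x|\ge 2^{k+1}$, $y\in A_k$) and $|x-y|\ge 2^{k-2}$ (for $|x|\le 2^{k-2}$, $y\in A_k$) immediately yields conditions \eqref{5.1} and \eqref{5.2} of Theorem~\ref{Th5.1}. Both of your constants and the observation that $x\notin\operatorname{supp}f$ in each regime are accurate, so the corollary follows verbatim from the theorem.
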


\begin{remark}
It is worth noting that \eqref{7} is satisfied by many operators studied in harmonic analysis, such as Calder$\acute{o}$n-Zygmund operators, Hardy-Littlewood maximal operators, R.Fefferman’s singular integral operators and Bochner-Riesz means at the critical index and so on.
\end{remark}

\begin{theorem}
	Let $0<l<n$, $0\leq \lambda < \infty$, $0<p_1\leq p_2\leq \infty$, $1<\vec{q_1}<\frac{1}{l}$, $l=\sum\limits_{i=1}^n\frac{1}{q_{1i}}-\sum\limits_{i=1}^n\frac{1}{q_{2i}}~$and $\lambda-\sum\limits_{i=1}^n\frac{1}{q_{2i}}<\alpha<n-\sum\limits_{i=1}^n\frac{1}{q_{1i}},$ suppose a sublinear operators $I_l$ satisfied that\\
	$(1)$ $I_l$ is bounded from $L^{\vec{q_1}}\left(\mathbb{R}^n\right)$ to $L^{\vec{q_2}}\left(\mathbb{R}^n\right);$\\
	$(2)$ for suitable functions $f$ with $suppf\subset A_j$ and $|x|\geq2^{j+1}$ with $j\in \mathbb {Z},$
	\begin{equation}
	\mid I_lf(x)\mid\leq C\frac{\|f\|_{L^1}}{|x|^{n-l}} ;\label{5.3}
	\end{equation}
	$(3)$ for suitable functions $f$ with $suppf\subset A_j$ and $|x|\leq 2^{j-2}$ with $j\in \mathbb {Z},$
	\begin{equation}
	\mid I_lf(x)\mid\leq C\frac{\|f\|_{L^1}}{2^{j(n-l)}} .\label{5.4}
	\end{equation}
	Then $I_l$ is also bounded from $M\dot{K}_{p_1,\vec{q}_1}^{\alpha,\lambda}(\mathbb{R}^n)$ to $M\dot{K}_{p_2,\vec{q}_2}^{\alpha,\lambda}(\mathbb{R}^n).$
\end{theorem}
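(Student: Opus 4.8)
The plan is to imitate the proof of Theorem~\ref{Th5.1}, with the fractional decay estimates \eqref{5.3}--\eqref{5.4} in the roles played there by \eqref{5.1}--\eqref{5.2}. First I would remove the mismatch between the two integrability exponents: since $p_1\le p_2$, the embedding $M\dot{K}_{p_1,\vec{q}_2}^{\alpha,\lambda}\subset M\dot{K}_{p_2,\vec{q}_2}^{\alpha,\lambda}$ (Proposition~2.3(1), applied with second index $\vec{q}_2$) gives $\|I_lf\|_{M\dot{K}_{p_2,\vec{q}_2}^{\alpha,\lambda}}\le C\|I_lf\|_{M\dot{K}_{p_1,\vec{q}_2}^{\alpha,\lambda}}$, so it suffices to prove $\|I_lf\|_{M\dot{K}_{p_1,\vec{q}_2}^{\alpha,\lambda}}\le C\|f\|_{M\dot{K}_{p_1,\vec{q}_1}^{\alpha,\lambda}}$; write $p:=p_1$. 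Then I would decompose $f=\sum_{j\in\mathbb{Z}}f_j$ with $f_j=f\chi_j$, and for each $k_0\in\mathbb{Z}$ and each $k\le k_0$ bound $\|(I_lf)\chi_k\|_{L^{\vec{q}_2}}\le\sum_{j\in\mathbb{Z}}\|(I_lf_j)\chi_k\|_{L^{\vec{q}_2}}$, splitting the $j$-sum into $j\le k-2$ (contribution $E_1$), $|j-k|\le1$ (contribution $E_2$) and $j\ge k+2$ (contribution $E_3$), exactly as in Theorem~\ref{Th5.1}.

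For $E_2$ there are at most three values of $j$ for each $k$, and the assumed boundedness of $I_l$ from $L^{\vec{q}_1}(\mathbb{R}^n)$ to $L^{\vec{q}_2}(\mathbb{R}^n)$ gives $\|(I_lf_j)\chi_k\|_{L^{\vec{q}_2}}\le C\|f_j\|_{L^{\vec{q}_1}}$, whence $E_2\le C\|f\|_{M\dot{K}_{p,\vec{q}_1}^{\alpha,\lambda}}$ just as for $E_2$ in Theorem~\ref{Th5.1}. For $E_1$, when $j\le k-2$ and $x\in A_k$ estimate \eqref{5.3} gives $|I_lf_j(x)|\le C\|f_j\|_{L^1}\,2^{-k(n-l)}$; the generalized H\"older inequality on mixed Lebesgue spaces together with $\|\chi_{A_m}\|_{L^{\vec{q}}}\sim 2^{m\sum_{i=1}^{n}1/q_i}$ yields $\|f_j\|_{L^1}\le\|f_j\|_{L^{\vec{q}_1}}\|\chi_j\|_{L^{\vec{q}_1^{\prime}}}\sim\|f_j\|_{L^{\vec{q}_1}}\,2^{j(n-\sum_{i=1}^{n}1/q_{1i})}$, and combining these with $\|\chi_k\|_{L^{\vec{q}_2}}\sim 2^{k\sum_{i=1}^{n}1/q_{2i}}$ and the hypothesis $l=\sum_{i=1}^{n}1/q_{1i}-\sum_{i=1}^{n}1/q_{2i}$ makes the powers collapse to $\|(I_lf_j)\chi_k\|_{L^{\vec{q}_2}}\le C\|f_j\|_{L^{\vec{q}_1}}\,2^{(j-k)(n-\sum_{i=1}^{n}1/q_{1i})}$, with $n-\sum_{i=1}^{n}1/q_{1i}>0$ since $\vec{q}_1>1$. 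From here the argument is verbatim that of $E_1$ in Theorem~\ref{Th5.1}: handle $0<p\le1$ by the $p$-subadditivity of $t\mapsto t^p$ and $1<p<\infty$ by H\"older in the $j$-sum with the exponent split in half, then interchange the $j$- and $k$-summations; the resulting geometric series in $k$ converges precisely because $\alpha<n-\sum_{i=1}^{n}1/q_{1i}$, so $E_1\le C\|f\|_{M\dot{K}_{p,\vec{q}_1}^{\alpha,\lambda}}$.

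The term $E_3$ is the \emph{main obstacle}. When $j\ge k+2$ and $x\in A_k$ estimate \eqref{5.4} gives $|I_lf_j(x)|\le C\|f_j\|_{L^1}\,2^{-j(n-l)}$, and the same H\"older computation now produces $\|(I_lf_j)\chi_k\|_{L^{\vec{q}_2}}\le C\|f_j\|_{L^{\vec{q}_1}}\,2^{(k-j)\sum_{i=1}^{n}1/q_{2i}}$. The difficulty is that the $j$-summation runs to $+\infty$, whereas the Herz--Morrey norm only controls the partial sums $\big(\sum_{l\le m}2^{l\alpha p}\|f_l\|_{L^{\vec{q}_1}}^{p}\big)^{1/p}\le 2^{m\lambda}\|f\|_{M\dot{K}_{p,\vec{q}_1}^{\alpha,\lambda}}$ at each scale $m$. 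I would close it exactly as for $E_3$ in Theorem~\ref{Th5.1}: split the inner sum at $j=k_0$; the part $k+2\le j\le k_0$ is summed by interchanging the order of summation, the geometric series in $k$ converging because $\lambda-\sum_{i=1}^{n}1/q_{2i}<\alpha$; for the tail $j>k_0$ one inserts the above Morrey bound at scale $j$ and is left with a product of two geometric series, one summable thanks to $\lambda>0$ (so that $\sum_{k\le k_0}2^{k\lambda p}<\infty$, the only place $\lambda>0$ is used, cf.\ the remark following Theorem~\ref{Th5.1}) and the other summable thanks to $\alpha+\sum_{i=1}^{n}1/q_{2i}-\lambda>0$, again with the case split $0<p\le1$ / $1<p<\infty$. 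Summing $E_1+E_2+E_3$ and taking the supremum over $k_0$ gives $\|I_lf\|_{M\dot{K}_{p_1,\vec{q}_2}^{\alpha,\lambda}}\le C\|f\|_{M\dot{K}_{p_1,\vec{q}_1}^{\alpha,\lambda}}$, which by the reduction in the first paragraph completes the proof.
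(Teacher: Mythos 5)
Your proposal is correct and follows essentially the same route as the paper's own proof: the reduction to $p_1$ via Proposition 2.3, the three-fold dyadic splitting, the H\"older/characteristic-function computation using $l=\sum_{i}1/q_{1i}-\sum_{i}1/q_{2i}$, and the case split on $p$. If anything, your treatment of the $j>k_0$ tail in $E_3$ (explicit split at $k_0$ plus the Morrey bound at scale $j$) is written out more carefully than the paper's abbreviated version, but it is the same argument the paper invokes from its Theorem 3.1.
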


\begin{proof}
		Assuming that $0<p<\infty,$ the proof of the case $p=\infty$ is simpler.\\
	Assuming $f\in M\dot{K}_{p_1,\vec{q}_1}^{\alpha,\lambda}(\mathbb{R}^n)$,   $$f(x)=\sum_{j=-\infty}^{\infty}f(x)\chi_{j}(x)=\sum_{j=-\infty}^{\infty} f _{j}(x),$$
	$	\|I_lf\|_{M\dot{K}_{p_2,\vec{q}_2}^{\alpha,\lambda}}\leq \|I_lf\|_{M\dot{K}_{p_1,\vec{q}_2}^{\alpha,\lambda}}
	$ is gained through the Proposition 2.3.\\
Furthermore,                        
	$$
	\begin{aligned}
	\|I_lf\|_{M\dot{K}_{p_2,\vec{q}_2}^{\alpha,\lambda}}^{p_1}
	&\leq\mathop{sup}\limits_{k_{0}\in \mathbb{Z}} 2^{-k_{0}\lambda p_1} \left\{\sum_{k=-\infty}^{k_{0}}2^{k \alpha p_1}
	\left\| |I_lf|\chi_{k} \right\| _{L^{\vec{q}_2}}^{p_1} \right\}\\
	&\leq C\mathop{sup}\limits_{k_{0}\in \mathbb{Z}} 2^{-k_{0}\lambda p_1} \left\{\sum_{k=-\infty}^{k_{0}}2^{k \alpha p_1}
	\left\| \left[\sum ^{k-2}_{j=-\infty}\left|I_l(f_{j})(\cdot)\right|\right]\chi_{k}(\cdot) \right\| _{L^{\vec{q}_{2}}}^{p_{1}}\right\}\\
	&\quad +C\mathop{sup}\limits_{k_{0}\in \mathbb{Z}} 2^{-k_{0}\lambda p_1} \left\{\sum_{k=-\infty}^{k_{0}}2^{k \alpha p_1}
	\left\| \left[\sum ^{k+1}_{j=k-1}\left|I_l(f_{j})(\cdot)\right|\right]\chi_{k}(\cdot) \right\| _{L^{\vec{q}_{2}}}^{p_{1}}\right\}\\
	\end{aligned}
	$$
	$$
	\begin{aligned}
 \quad\quad\quad\quad\quad\quad
	&\quad +C\mathop{sup}\limits_{k_{0}\in \mathbb{Z}} 2^{-k_{0}\lambda p_1} \left\{\sum_{k=-\infty}^{k_{0}}2^{k \alpha p_1}
	\left\| \left[\sum ^{\infty}_{j=k+2}\left|I_l(f_{j})(\cdot)\right|\right]\chi_{k}(\cdot) \right\|_{L^{\vec{q}_{2}}}^{p_{1}}\right\}\\
	&:=C\left(G_1+G_2+G_3\right).
	\end{aligned}
	$$
	For  $G_{2}$,  $I_l$ is bounded from $L^{\vec{q_1}}\left(\mathbb{R}^n\right)$ to $L^{\vec{q_2}}\left(\mathbb{R}^n\right)$,\\
	$$
	\begin{aligned}
	G_2&\leq C\mathop{sup}\limits_{k_{0}\in \mathbb{Z}} 2^{-k_{0}\lambda p_1} \left\{\sum_{k=-\infty}^{k_{0}}2^{k \alpha p_1}\left[\sum ^{k+1}_{j=k-1}
	\left\|\left|I_l(f_{j})(\cdot)\right| \right\|_{L^{\vec{q}_{2}}}\right] ^{p_{1}}\right\}\\
	&\leq C\mathop{sup}\limits_{k_{0}\in \mathbb{Z}} 2^{-k_{0}\lambda p_1} \left\{\sum_{k=-\infty}^{k_{0}}2^{k \alpha p_1}\left[\sum ^{k+1}_{j=k-1}
	\left\|\left|f_{j}\right| \right\|_{L^{\vec{q}_{2}}}\right] ^{p_{1}}\right\}\\
	&\leq C\mathop{sup}\limits_{k_{0}\in \mathbb{Z}} 2^{-k_{0}\lambda p_1} \left\{\sum_{k=-\infty}^{k_{0}}2^{k \alpha p_1}\left\|\left|f_{j}\right| \right\|_{L^{\vec{q}_{1}}} ^{p_{1}}\right\}\\
	&\leq C\|f\|_{M\dot{K}_{p_1,\vec{q}_1}^{\alpha,\lambda}}^{p_1}.
	\end{aligned}
	$$
Afterwards $G_1$, $supp(f\chi_l)\subset A_{k-2},$  $x\in A_k,$ then $2^{k-1}\leq \mid x \mid <2^k,$ through the condition \eqref{5.3}, $$
\begin{aligned}
\mid I_l(f\chi_j)(x) \mid \leq C\frac{\|f\chi_j\|_{L^1}}{|x|^{n-l}}\leq C 2^{-k(n-l)}\|f\chi_j\|_{L^1}.
\end{aligned}$$
Then
$$
\begin{aligned}
G_1&\leq C\mathop{sup}\limits_{k_{0}\in \mathbb{Z}} 2^{-k_{0}\lambda p_1} \left\{\sum_{k=-\infty}^{k_{0}}2^{k \alpha p_1} \left\| \left[\sum ^{k-2}_{j=-\infty}    \left| 2^{-k(n-l)}\right|\left\|f\chi_{j}\right\|_{L^{1}}\right]\chi_k  \right\|_{L^{\vec{q}_{2}}}^{p_{1}}\right\}\\
&\leq C\mathop{sup}\limits_{k_{0}\in \mathbb{Z}} 2^{-k_{0}\lambda p_1} \left\{\sum_{k=-\infty}^{k_{0}}2^{\left(k(\alpha-n+l)+\sum\limits_{i=1}^n\frac{1}{q_{2i}}\right)p_1}\left(\sum\limits_{j=-\infty}^{k-2}\|f\chi_j\|_{L^1}\right)^{p_1}\right\}\\
&\leq C\mathop{sup}\limits_{k_{0}\in \mathbb{Z}} 2^{-k_{0}\lambda p_1} \left\{\sum_{k=-\infty}^{k_{0}}2^{\left(k(\alpha-n+l)+\sum\limits_{i=1}^n\frac{1}{q_{2i}}\right)p_1}\left(\sum\limits_{j=-\infty}^{k-2}\|f\chi_j\|_{L^{\vec{q_1}}}\|\chi_j\|_{L^{\vec{q_1}^{\prime}}}\right)^{p_1}\right\}\\
&\leq C\mathop{sup}\limits_{k_{0}\in \mathbb{Z}} 2^{-k_{0}\lambda p_1} \left\{\sum_{k=-\infty}^{k_{0}}\left[\sum\limits_{j=-\infty}^{k-2}2^{j\alpha}\|f\chi_j\|_{L^{\vec{q_1}}}2^{(j-k)\left(n-\sum\limits_{i=1}^n\frac{1}{q_{1i}}-\alpha\right)}\right]^{p_1}\right\},\\
\end{aligned}$$
case 1: $0<p_1 \leq 1$
	$$
	\begin{aligned}
	G_1&\leq C\mathop{sup}\limits_{k_{0}\in \mathbb{Z}} 2^{-k_{0}\lambda p_1} \left\{\sum_{k=-\infty}^{k_{0}}2^{j\alpha}\|f\chi_j\|^{p_1}_{L^{\vec{q_1}}}\sum\limits_{k=j+2}^{\infty}2^{(j-k)\left(n-\sum\limits_{i=1}^n\frac{1}{q_{1i}}-\alpha\right)p_1}\right\}\\
	&\leq C\mathop{sup}\limits_{k_{0}\in \mathbb{Z}} 2^{-k_{0}\lambda p_1} \left\{\sum_{j=-\infty}^{k_{0}}2^{j \alpha p_1}\left\|\left|f_{j}\right| \right\|_{L^{\vec{q}_{1}}} ^{p_{1}}\right\}\\
	&\leq C\|f\|^{p_1}_{M\dot{K}_{p_1,\vec{q}_1}^{\alpha,\lambda}}.
	\end{aligned}
	$$
That is why $\alpha<n-\sum\limits_{i=1}^n\frac{1}{q_{1i}},$\\
	case 2: $1<p_1 <\infty $
	$$
	\begin{aligned}
	G_1
&\leq  C\mathop{sup}\limits_{k_{0}\in \mathbb{Z}} 2^{-k_{0}\lambda} \Bigg\{\sum_{k=-\infty}^{k_{0}}\Bigg( \sum_{j=-\infty}^{k-2} 2^{j \alpha} \|f\chi_j\|_{L^{\vec{q}}} 2^{(j-k)\frac{1}{2}\left(n-\sum\limits_{i=1}^n\frac{1}{q_{1i}}- \alpha\right)}\\
&\quad \quad \quad \times\sum_{j=-\infty}^{k-2}2^{(j-k)\frac{1}{2}\left(n-\sum\limits_{i=1}^n\frac{1}{q_{1i}}- \alpha\right)} \Bigg)^p \Bigg\}\\
&\leq  C\mathop{sup}\limits_{k_{0}\in \mathbb{Z}} 2^{-k_{0}\lambda} \Bigg\{\sum_{k=-\infty}^{k_{0}}\Bigg( \sum_{j=-\infty}^{k-2} 2^{j p_1 \alpha} \|f\chi_j\|^{p_1}_{L^{\vec{q}}} 2^{(j-k)\frac{p_1}{2}\left(n-\sum\limits_{i=1}^n\frac{1}{q_{1i}}- \alpha\right)}\Bigg)\\
&\quad \quad \quad \times\Bigg(\sum_{j=-\infty}^{k-2}2^{(j-k)\frac{p'_1}{2}\left(n-\sum\limits_{i=1}^n\frac{1}{q_{1i}}- \alpha\right)} \Bigg)^{\frac{p_1}{p'_1}} \Bigg\}\\
&\leq C\mathop{sup}\limits_{k_{0}\in \mathbb{Z}} 2^{-k_{0}\lambda p_1} \left\{\sum_{j=-\infty}^{k_{0}}2^{j \alpha p_1}\left\|\left|f_{j}\right| \right\|_{L^{\vec{q}_{1}}} ^{p_{1}}\right\}\\
&\leq C\|f\|^{p_1}_{M\dot{K}_{p_1,\vec{q}_1}^{\alpha,\lambda}} .
	\end{aligned}
	$$
	That is also why $\alpha<n\left(1-\frac{1}{n}\sum\limits_{i=1}^n\frac{1}{q_i}\right). $
	$E_3$ uses the same way, $j\geq k+2$, $x\in A_k,$  through the condition \eqref{5.4},
	$$
	\begin{aligned}
	\mid I_l(f\chi_j)(x) \mid \leq C\frac{\|f\chi_j\|_{L^1}}{2^{-j(n-l)}}\leq C 2^{-j\sum\limits_{i=1}^n\frac{1}{q_{2i}}}\|f\chi_j\|_{L^{\vec{q_1}}}.
	\end{aligned}
		$$
Then
$$
\begin{aligned}
G_3&\leq C\mathop{sup}\limits_{k_{0}\in \mathbb{Z}} 2^{-k_{0}\lambda p_1} \left\{\sum_{j=-\infty}^{k_{0}}
\left(\sum\limits_{j=k+2}^{\infty}2^{j\alpha}\|f\chi_j\|_{L^{\vec{q_1}}}2^{(k-j)\left(\sum\limits_{i=1}^n\frac{1}{q_{2i}}+\alpha\right)}\right)^{p_1}\right\},
\end{aligned}
$$\\
case 1: $0<p_1\leq 1$
$$
\begin{aligned}
\quad \quad \quad J_3&\leq \mathop{sup}\limits_{k_{0}\in \mathbb{Z}} 2^{-k_{0}\lambda p_1} \left\{\sum_{j=-\infty}^{k_{0}}2^{j\alpha p_1}\|f\chi_j\|_{L^{\vec{q_1}}}^{p_1}\sum\limits_{k=-\infty}^{j-2}2^{(k-j)\left(\sum\limits_{i=1}^n\frac{1}{q_{2i}}+\alpha\right)p_1}\right\}\\
\end{aligned}
$$
$$
\leq\mathop{sup}\limits_{k_{0}\in \mathbb{Z}} 2^{-k_{0}\lambda p_1} \left\{\sum_{j=-\infty}^{k_{0}}2^{j\alpha p_1}\|f\chi_j\|_{L^{\vec{q_1}}}^{p_1}\right\}\\
\leq C\|f\|^{p_1}_{M\dot{K}_{p_1,\vec{q}_1}^{\alpha,\lambda}} ,
$$
case 2:  $1<p_1<\infty$
$$
\begin{aligned}
G_3&\leq C\mathop{sup}\limits_{k_{0}\in \mathbb{Z}} 2^{-k_{0}\lambda p_1} \Bigg\{\sum_{j=-\infty}^{k_{0}}2^{j\alpha p_1}\|f\chi_j\|_{L^{\vec{q_1}}}^{p_1}2^{(k-j)\left(\sum\limits_{i=1}^n\frac{1}{q_{2i}}+\alpha\right)\frac{p_{1}}{2}}\\
&\quad \quad  \times \bigg(\sum\limits_{j=k+2}^{\infty}2^{(k-j)\left(\sum\limits_{i=1}^n\frac{1}{q_{2i}}+\alpha\right)\frac{p_1^{\prime}}{2}}\bigg)^{\frac{p_1}{p_1^{\prime}}}\Bigg\}	\\
				\end{aligned}
$$
$
\begin{aligned}
\quad \quad\quad\quad\quad\quad \quad\quad\quad~   &\leq C\left\{\sum_{j\in \mathbb{Z}}2^{j\alpha p_1}\|f\chi_j\|_{L^{\vec{q_1}}}^{p_1}\right\}\\
&\leq C\|f\|^{p_1}_{M\dot{K}_{p_1,\vec{q}_1}^{\alpha,\lambda}} .
\end{aligned}
$\\
That is why $\lambda-\sum\limits_{i=1}^n\frac{1}{q_{2i}}<\alpha$.\\
In conclusion
$$
\|I_lf\|^{p_1}_{M\dot{K}_{p_2,\vec{q}_2}^{\alpha,\lambda}}\leq C\|f\|^{p_1}_{M\dot{K}_{p_1,\vec{q}_1}^{\alpha,\lambda}}.
$$
Then
$$
\|I_lf\|_{M\dot{K}_{p_2,\vec{q}_2}^{\alpha,\lambda}}\leq C\|f\|_{M\dot{K}_{p_1,\vec{q}_1}^{\alpha,\lambda}}.
$$
This completes the proof of the theorem.
\end{proof}

\begin{corollary}
Let $0<l<n$, $0\leq \lambda < \infty$, $0<p_1\leq p_2\leq \infty$,  $l=\sum\limits_{i=1}^n\frac{1}{q_{1i}}-\sum\limits_{i=1}^n\frac{1}{q_{2i}}$, $1<\vec{q_1}<\frac{1}{l}$ and $\lambda-\sum\limits_{i=1}^n\frac{1}{q_{2i}}<\alpha<n-\sum\limits_{i=1}^n\frac{1}{q_{1i}},$ suppose a sublinear operators $I_l$ satisfied that $I_l$ is bounded from $L^{\vec{q_1}}\left(\mathbb{R}^n\right)$ to $L^{\vec{q_2}}\left(\mathbb{R}^n\right)$ and condition 

	\begin{equation}
	\left|I_lf(x)\right|\leq C\int_{\mathbb{R}}\frac{\left|f(x)\right|}{\left|x-y\right|^{n-l}}dy,\quad \quad f\in L^1(\mathbb{R}^n) \mbox{ with compact support}\quad x\notin suppf,\label{5.5}
    \end{equation}
then $I_l$ is also bounded from $M\dot{K}_{p_1,\vec{q}_1}^{\alpha,\lambda}(\mathbb{R}^n)$ to $M\dot{K}_{p_2,\vec{q}_2}^{\alpha,\lambda}(\mathbb{R}^n)$.
\end{corollary}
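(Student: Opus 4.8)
The plan is to obtain this statement as an immediate consequence of Theorem 3.2: all of its index hypotheses are already assumed here and $I_l$ is assumed bounded from $L^{\vec{q_1}}(\mathbb{R}^n)$ to $L^{\vec{q_2}}(\mathbb{R}^n)$, so it only remains to check that the pointwise kernel bound \eqref{5.5} forces the two annular size estimates \eqref{5.3} and \eqref{5.4}. Fix $j\in\mathbb{Z}$ and a suitable $f$ with $\operatorname{supp}f\subset A_j$. Since $A_j$ is bounded, $f$ has compact support, and by Hölder's inequality for mixed norms $\|f\|_{L^1}\leq\|f\|_{L^{\vec{q_1}}}\,\|\chi_{A_j}\|_{L^{\vec{q_1}^{\prime}}}<\infty$; hence $f\in L^1(\mathbb{R}^n)$ and \eqref{5.5} is applicable to $f$.

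To verify \eqref{5.3}, suppose $|x|\geq 2^{j+1}$. For every $y\in\operatorname{supp}f\subset A_j$ we have $|y|\leq 2^{j}\leq\frac12|x|$, so $x\notin\operatorname{supp}f$ and $|x-y|\geq|x|-|y|\geq\frac12|x|$. Substituting into \eqref{5.5} gives
$$
|I_lf(x)|\leq C\int_{A_j}\frac{|f(y)|}{|x-y|^{n-l}}\,dy\leq C\,2^{n-l}\,\frac{\|f\|_{L^1}}{|x|^{n-l}},
$$
which is \eqref{5.3}. To verify \eqref{5.4}, suppose $|x|\leq 2^{j-2}$. For $y\in A_j$ we have $|y|>2^{j-1}$, hence $x\notin\operatorname{supp}f$ and $|x-y|\geq|y|-|x|>2^{j-1}-2^{j-2}=2^{j-2}$; then \eqref{5.5} yields
$$
|I_lf(x)|\leq C\int_{A_j}\frac{|f(y)|}{|x-y|^{n-l}}\,dy\leq C\,2^{2(n-l)}\,\frac{\|f\|_{L^1}}{2^{j(n-l)}},
$$
which is \eqref{5.4}.

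With \eqref{5.3} and \eqref{5.4} in hand, together with the assumed $L^{\vec{q_1}}(\mathbb{R}^n)\to L^{\vec{q_2}}(\mathbb{R}^n)$ boundedness of $I_l$, every hypothesis of Theorem 3.2 is satisfied, and that theorem gives the boundedness of $I_l$ from $M\dot{K}_{p_1,\vec{q}_1}^{\alpha,\lambda}(\mathbb{R}^n)$ to $M\dot{K}_{p_2,\vec{q}_2}^{\alpha,\lambda}(\mathbb{R}^n)$. I do not expect any genuine obstacle in carrying this out; the only step meriting a moment's care is the a priori $L^1$-integrability of the annular pieces, which follows from Hölder's inequality since each $A_j$ has finite measure and $f\in L^{\vec{q_1}}_{\mathrm{loc}}$. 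This parallels exactly the way Corollary 3.1 is deduced from Theorem 3.1.
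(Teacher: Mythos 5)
Your proposal is correct and matches the paper's intended (and unwritten) argument exactly: the corollary is deduced from Theorem 3.2 by checking that the pointwise bound \eqref{5.5} yields the annular estimates \eqref{5.3} and \eqref{5.4} via the elementary geometric inequalities $|x-y|\geq \frac{1}{2}|x|$ when $|x|\geq 2^{j+1}$ and $|x-y|\geq 2^{j-2}$ when $|x|\leq 2^{j-2}$, just as Corollary 3.1 follows from Theorem 3.1. Your extra remark on the a priori $L^1$-integrability of the annular pieces is a harmless refinement the paper leaves implicit.
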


\begin{remark}
	It is worth noting that many operators studied in harmonic analysis, such as Risez potential operators and fractional maximum operators, meet the requirements of \eqref{5.5}.
\end{remark}
\section{Boundedness of their commutator operators in homogeneous mixed Herz-Morrey spaces}
In this section, the boundedness for commutators of Hardy-Litttelwood maximal operators, fractional maximal operators, sublinear operators and fractional type operators with BMO functions.
\begin{definition}\cite{SC1989} \cite{SC1993}
	Assuming$~b\in BMO(\mathbb{R}^n)$, Hardy-Litttelwood maximal commutator and fractional maximal commutator respectively defined as
	$$
	M_bf(x)=\mathop{sup}\limits_{r>0}\frac{1}{|B(x,r)|}\int |b(x)-b(y)||f(y)|dy,
	$$
	and
	$$
	M_b^{l}f(x)=\mathop{sup}\limits_{r>0}|B(x,r)|^{-\frac{1}{l^{\prime}}}\int |b(x)-b(y)||f(y)|dy,
	$$
	where $B(x,y)=\{y \in \mathbb{R}^n:|x-y|\leq r\}, 1<l< \infty$ and $\frac{1}{l}+\frac{1}{l^{\prime}}=1 .$
\end{definition}
\begin{theorem}
	Let $0\leq \lambda <\infty$, $0<p< \infty$, $1<\vec{q}< \infty$ and $-\sum\limits_{i=1}^n \frac{1}{q_i} +\lambda<\alpha<n\left(1-\frac{1}{n}\sum\limits_{i=1}^n\frac{1}{q_i}\right),$  suppose sublinear operators $M_bf(x)$ satisfied that  $M_bf(x)$ is bounded on $L^{\vec{q}}\left(\mathbb{R}^n\right)and ~b\in BMO(\mathbb{R}^n).$
	Then $M_bf(x)$ is also bounded on $M\dot{K}_{p,\vec{q}}^{\alpha,\lambda}\left(\mathbb{R}^n\right).$
\end{theorem}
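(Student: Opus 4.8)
The plan is to follow the proof of Theorem \ref{Th5.1} almost verbatim, the only genuinely new ingredient being a pointwise/averaged size estimate for $M_bf_j$ off the support of $f_j$ which carries the extra $BMO$ oscillation. Fix $f\in M\dot K_{p,\vec q}^{\alpha,\lambda}(\mathbb R^n)$ and decompose $f=\sum_{j\in\mathbb Z}f_j$ with $f_j=f\chi_j$. Since $M_b$ is sublinear, $|M_bf(x)|\le\sum_j|M_bf_j(x)|$, and splitting the inner sum according to $j\le k-2$, $|j-k|\le1$ and $j\ge k+2$ exactly as in Theorem \ref{Th5.1} gives $\|M_bf\|_{M\dot K_{p,\vec q}^{\alpha,\lambda}}\le C(E_1+E_2+E_3)$, where $E_1,E_2,E_3$ are the same three quantities with $T$ replaced by $M_b$. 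The middle piece $E_2$ is disposed of immediately by the hypothesis that $M_b$ is bounded on $L^{\vec q}(\mathbb R^n)$, exactly as in the treatment of $E_2$ in Theorem \ref{Th5.1}.

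For $E_1$ and $E_3$ I would first establish the analogues of \eqref{5.1} and \eqref{5.2}. Suppose $\mathrm{supp}\,f_j\subset A_j$ and $x\in A_k$ with $|j-k|\ge2$. Any ball $B(x,r)$ meeting $A_j$ must have $r\gtrsim 2^{\max(j,k)}$, so
\[
|M_bf_j(x)|\le C\,2^{-n\max(j,k)}\int_{A_j}|b(x)-b(y)|\,|f_j(y)|\,dy .
\]
Writing $B=B_{\max(j,k)}$ and $|b(x)-b(y)|\le|b(x)-b_B|+|b_B-b_{B_j}|+|b_{B_j}-b(y)|$, the telescoping bound $|b_B-b_{B_j}|\le C|j-k|\,\|b\|_{BMO}$, H\"older's inequality in mixed norm, the mixed-norm John--Nirenberg estimate $\|(b-b_Q)\chi_Q\|_{L^{\vec r}}\le C\|b\|_{BMO}\|\chi_Q\|_{L^{\vec r}}$ valid for $1<\vec r<\infty$, and $\|\chi_k\|_{L^{\vec q}}\|\chi_k\|_{L^{\vec{q^{\prime}}}}\sim2^{nk}$, together yield after taking $L^{\vec q}$-norms over $A_k$
\[
\|(M_bf_j)\chi_k\|_{L^{\vec q}}\le C\,(|j-k|+1)\,\|b\|_{BMO}\,2^{(j-k)(n-\sum_i1/q_i)}\,\|f_j\|_{L^{\vec q}}\quad(j\le k-2),
\]
\[
\|(M_bf_j)\chi_k\|_{L^{\vec q}}\le C\,(|j-k|+1)\,\|b\|_{BMO}\,2^{(k-j)\sum_i1/q_i}\,\|f_j\|_{L^{\vec q}}\quad(j\ge k+2).
\]
These are exactly the inequalities used for $E_1$ and $E_3$ in Theorem \ref{Th5.1}, up to the harmless factor $(|j-k|+1)\|b\|_{BMO}$.

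Then $E_1$ and $E_3$ are estimated by repeating the manipulations in the proof of Theorem \ref{Th5.1}: insert the bounds above, treat $0<p\le1$ and $1<p<\infty$ separately, apply H\"older's inequality in the summation index $j$ (splitting each exponent into two equal halves), interchange the order of summation, and --- for the part of $E_3$ with $j>k_0$ --- use $\sum_{k\le k_0}2^{k\lambda p}<\infty$ together with $2^{-j\lambda}\big(\sum_{l\le j}2^{l\alpha p}\|f_l\|_{L^{\vec q}}^p\big)^{1/p}\le\|f\|_{M\dot K_{p,\vec q}^{\alpha,\lambda}}$. The only difference is that the geometric series $\sum_{m\ge2}2^{-m\varepsilon}$ appearing there becomes $\sum_{m\ge2}(m+1)2^{-m\varepsilon}$, which still converges for every $\varepsilon>0$ (equivalently, absorb $(m+1)\le C_\varepsilon 2^{m\varepsilon/2}$ into the exponent); the strict inequalities $-\sum_i1/q_i+\lambda<\alpha<n(1-\tfrac1n\sum_i1/q_i)$ keep the relevant exponents strictly positive after this perturbation. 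Hence $E_1+E_3\le C\|b\|_{BMO}\|f\|_{M\dot K_{p,\vec q}^{\alpha,\lambda}}$, and combining with the bound for $E_2$ yields $\|M_bf\|_{M\dot K_{p,\vec q}^{\alpha,\lambda}}\le C\|b\|_{BMO}\|f\|_{M\dot K_{p,\vec q}^{\alpha,\lambda}}$.

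The main obstacle is the second paragraph: justifying the mixed-norm John--Nirenberg estimate and keeping careful track of the logarithmic factor $|j-k|$ so that it does not spoil the geometric summations in $E_1$ and $E_3$. Once those off-support size bounds are available, the remainder is a mechanical repetition of the argument for Theorem \ref{Th5.1}.
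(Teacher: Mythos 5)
Your proposal follows essentially the same route as the paper: the same dyadic decomposition into near/far pieces, the $L^{\vec q}$-boundedness hypothesis for the diagonal band, the off-support estimates $\|(M_bf_j)\chi_k\|_{L^{\vec q}}\le C(|j-k|+1)\|b\|_{BMO}2^{(j-k)(n-\sum_i 1/q_i)}\|f_j\|_{L^{\vec q}}$ (resp. $2^{(k-j)\sum_i 1/q_i}$) for the two far bands, and the same absorption of the logarithmic factor into the geometric decay guaranteed by the strict inequalities on $\alpha$. The only difference is cosmetic: you actually derive the key $BMO$ estimates (via the telescoping bound and the mixed-norm John--Nirenberg inequality) that the paper merely asserts, so your write-up is if anything more complete.
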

\begin{proof}
    Assuming $f \in M \dot{K}_{p, \vec{q}}^{\alpha, \lambda}\left(\mathbb{R}^{n}\right)$,
	$$
	f(x)=\sum_{j=-\infty}^{\infty} f(x) \chi_{j}(x) \equiv \sum_{j=-\infty}^{\infty} f_{j}(x).
	$$
	$$
	\begin{aligned}
	\left\|M_{b}(f)\right\|_{M \dot{K}_{p,  \vec{q}}^{\alpha, \lambda}} \leq & C \sup _{k_{0} \in \mathbb{Z}} 2^{-k_{0} \lambda}\left\{\sum_{k=-\infty}^{k_{0}} 2^{k \alpha p}\left(\sum_{j=-\infty}^{k-3}\left\|\left(M_{b}\left(f_{j}\right)(\cdot)\right) \chi_{k}(\cdot)\right\|_{L^{ \vec{q}}}\right)^{p}\right\}^{1 / p} \\
	&+C \sup _{k_{0} \in \mathbb{Z}} 2^{-k_{0} \lambda}\left\{\sum_{k=-\infty}^{k_{0}} 2^{k \alpha p}\left(\sum_{j=k-2}^{k+2}\left\|\left(M_{b}\left(f_{j}\right)(\cdot)\right) \chi_{k}(\cdot)\right\|_{L^{\vec{q}}}\right)^{p}\right\}^{1 / p} \\
	&+C \sup _{k_{0} \in \mathbb{Z}} 2^{-k_{0} \lambda}\left\{\sum_{k=-\infty}^{k_{0}} 2^{k \alpha p}\left(\sum_{j=k+3}^{\infty}\left\|\left(M_{b}\left(f_{j}\right)(\cdot)\right) \chi_{k}(\cdot)\right\|_{L^{\vec{q}}}\right)^{p}\right\}^{1 / p} \\
	:= & C\left(D_{1}+D_{2}+D_{3}\right) .
	\end{aligned}
	$$
	For~$D_{2}$,  $M_{b}$ is bounded on $L^{\vec{q}}\left(\mathbb{R}^{n}\right)$\cite{TN2017},
	$$
	\begin{aligned}
	D_{2} & \leq C \sup _{k_{0} \in \mathbb{Z}} 2^{-k_{0} \lambda}\left\{\sum_{k=-\infty}^{k_{0}} 2^{k \alpha p}\left(\sum_{j=k-2}^{k+2}\left\|f_{j}\right\|_{L^{\vec{q}}}\right)^{p}\right\}^{1 / p} \\
	& \leq C \sup _{k_{0} \in \mathbb{Z}} 2^{-k_{0} \lambda}\left\{\sum_{k=-\infty}^{k_{0}} 2^{k \alpha p}\left\|f_{k}\right\|_{L^{\vec{q}}}^{p}\right\}^{1 / p} \leq C\|f\|_{M \dot{K}_{p, \vec{q}}^{\alpha, \lambda}} .
	\end{aligned}
	$$
For $D_1$, the average value of $b(x)$ on $B(0,2^j)$ is recorded as $b_j$. According to the nature of $BMO(\mathbb{R}^n)$  \cite{EM1993} and note that $j \leq k-3$, it can be deduced
	$$
	\begin{aligned}
	\left\|\left(M_{b} f_{j}\right) \chi_{k}\right\|_{L^{\vec{q}}} 
	&\leq C\|b\|_{\mathrm{BMO}} 2^{(j-k) n\left(1-\frac{1}{n}\sum\limits_{i=1}^n\frac{1}{q_{i}}\right)}(k-j)\left\|f_{j}\right\|_{L^{\vec{q}}}\\ 
	&\leq C 2^{(j-k) n\left(1-\frac{1}{n}\sum\limits_{i=1}^n\frac{1}{q_{i}}\right)}(k-j)\left\|f_{j}\right\|_{L^{\vec{q}}} .
	\end{aligned}
	$$
	Then
	$$
	\begin{aligned}
	D_{1} & \leq C \sup _{k_{0} \in \mathbb{Z}} 2^{-k_{0} \lambda}\left\{\sum_{k=-\infty}^{k_{0}} 2^{k \alpha p}\left(\sum_{j=-\infty}^{k-3} 2^{(j-k)n\left(1-\frac{1}{n}\sum\limits_{i=1}^n\frac{1}{q_{i}}\right)}(k-j)\left\|f_{j}\right\|_{L^{\vec{q}}}\right)^{p}\right\}^{1 / p} \\
	&\leq C \sup _{k_{0} \in \mathbb{Z}} 2^{-k_{0} \lambda}\left\{\sum_{k=-\infty}^{k_{0}}\left(\sum_{j=-\infty}^{k-3} 2^{j \alpha}\left\|f_{j}\right\|_{L^{\vec{q}}} 2^{(k-j)\left[\alpha-n\left(1-\frac{1}{n}\sum\limits_{i=1}^n\frac{1}{q_{i}}\right)\right]}(k-j)\right)^{p}\right\}^{1 / p},\\
	\end{aligned}
	$$
case: $0<p \leq 1$
	$$
\begin{aligned}
D_1&\leq C \sup _{k_{0} \in \mathbb{Z}} 2^{-k_{0} \lambda}\left\{\sum_{k=-\infty}^{k_{0}}\sum_{j=-\infty}^{k-3} 2^{j \alpha p}\left\|f_{j}\right\|_{L^{\vec{q}}}^p 2^{(k-j)\left[\alpha-n\left(1-\frac{1}{n}\sum\limits_{i=1}^n\frac{1}{q_{i}}\right)\right]p}(k-j)^{p}\right\}^{1 / p} \\
\end{aligned}
$$
$$
\begin{aligned}
\quad \quad &\leq C \sup _{k_{0} \in \mathbb{Z}} 2^{-k_{0} \lambda}\left\{\sum_{j=-\infty}^{k_{0}}2^{j \alpha p} \left\|f_{j}\right\|_{L^{\vec{q}}}^p \sum_{k=j+3}^{k_0}  2^{(k-j)\left[\alpha-n\left(1-\frac{1}{n}\sum\limits_{i=1}^n\frac{1}{q_{i}}\right)\right]p}(k-j)^{p}\right\}^{1 / p}\\
& \leq C \sup _{k_{0} \in \mathbb{Z}} 2^{-k_{0} \lambda}\left\{\sum_{k=-\infty}^{k_{0}} 2^{k \alpha p}\left\|f_{k}\right\|_{L^{\vec{q}}}^{p}\right\}^{1 / p} \\
&\leq C\|f\|_{M \dot{K}_{p, \vec{q}}^{\alpha, \lambda}},
\end{aligned}
$$
case: $1<p< \infty$
	$$
\begin{aligned}
D_1&\leq C \sup _{k_{0} \in \mathbb{Z}} 2^{-k_{0} \lambda}
\Bigg\{\sum_{k=-\infty}^{k_{0}}
\left[\sum_{j=-\infty}^{k-3} 2^{j \alpha p}\left\|f_{j}\right\|_{L^{\vec{q}}}^p 2^{(k-j)\left[\alpha-n\left(1-\frac{1}{n}\sum\limits_{i=1}^n\frac{1}{q_{i}}\right)\right]\frac{p}{2}}\right] \\
&\quad\quad\quad\quad
\times \left[    \sum_{j=-\infty}^{k-3} (k-j)^{p^{\prime}} 2^{(k-j)\left[\alpha-n\left(1-\frac{1}{n}\sum\limits_{i=1}^n\frac{1}{q_{i}}\right)\right]\frac{p^\prime}{2}}  \right]^{\frac{p}{p^\prime}} \Bigg\}^{\frac{1}{p}}\\
&\leq C \sup _{k_{0} \in \mathbb{Z}} 2^{-k_{0} \lambda}\left\{\sum_{j=-\infty}^{k_{0}}2^{j \alpha p} \left\|f_{j}\right\|_{L^{\vec{q}}}^p \sum_{k=j+3}^{k_0}  2^{(k-j)\left[\alpha-n\left(1-\frac{1}{n}\sum\limits_{i=1}^n\frac{1}{q_{i}}\right)\right]\frac{p}{2}}\right\}^{1 / p}\\
& \leq C \sup _{k_{0} \in \mathbb{Z}} 2^{-k_{0} \lambda}\left\{\sum_{k=-\infty}^{k_{0}} 2^{k \alpha p}\left\|f_{k}\right\|_{L^{\vec{q}}}^{p}\right\}^{1 / p} \\
&\leq C\|f\|_{M \dot{K}_{p, \vec{q}}^{\alpha, \lambda}} .
\end{aligned}
$$
That is also why $\alpha<n\left(1-\frac{1}{n}\sum\limits_{i=1}^n\frac{1}{q_i}\right). $
$D_3$ uses the same way, $j> k+2$, $x\in A_k,$ 
	$$
	\begin{aligned}
	M_{b}\left(f_{j}\right)(x) & \leq C 2^{-j n} \int_{A_{j}}|b(x)-b(y)| \left|f_{j}(y)\right| \mathrm{d} y \\
	& \leq C 2^{-j n}\left|b(x)-b_{k}\right| \left\|f_{j}\right\|_{L^{1}}+C 2^{-j n} \int_{A_{j}}\left|b_{k}-b(y)\right| |f(y)| \mathrm{d} y .
	\end{aligned}
	$$
Then
	$$
	\begin{aligned}
	\left\|\left(M_{b} f_{j}\right) \chi_{k}\right\|_{L^{\vec{q}}} & \leq C\|b\|_{\mathrm{BMO}} 2^{(k-j) \sum\limits_{i=1}^n\frac{1}{q_{i}}}\left\|f_{j}\right\|_{L^{\vec{q}}}+C\|b\|_{\mathrm{BMO}} 2^{(k-j) \sum\limits_{i=1}^n\frac{1}{q_{i}}}(j-k)\left\|f_{j}\right\|_{L^{\vec{q}}}\\
	& \leq C\|b\|_{\mathrm{BMO}} 2^{(k-j) \sum\limits_{i=1}^n\frac{1}{q_{i}}}(j-k)\left\|f_{j}\right\|_{L^{\vec{q}}}\\ 
	&\leq C 2^{(k-j) \sum\limits_{i=1}^n\frac{1}{q_{i}}}(j-k)\left\|f_{j}\right\|_{L^{\vec{q}}} .
	\end{aligned}
	$$

	$$
	\begin{aligned}
	D_{3} & \leq C \sup _{k_{0} \in \mathbb{Z}} 2^{-k_{0} \lambda}\left\{\sum_{k=-\infty}^{k_{0}} 2^{k \alpha p}\left(\sum_{j=k+3}^{\infty} 2^{(k-j) \sum\limits_{i=1}^n\frac{1}{q_{i}}}(j-k)\left\|f_{j}\right\|_{L^{\vec{q}}}\right)^{p}\right\}^{1 / p} \\
	&=C \sup _{k_{0} \in \mathbb{Z}} 2^{-k_{0} \lambda}\left\{\sum_{k=-\infty}^{k_{0}}\left(\sum_{j=k+3}^{\infty} 2^{j \alpha}\left\|f_{j}\right\|_{L^{\vec{q}}}(j-k) 2^{-(j-k)\left(\alpha+\sum\limits_{i=1}^n\frac{1}{q_{i}}\right)}\right)^{p}\right\}^{1 / p},
	\end{aligned}
	$$
case: $0<p \leq 1$, use this inequality  $\left(\sum_{i=- \infty}^{\infty} |a_j|\right)^p \leq \sum_{i=- \infty}^{\infty} |a_j|^p, $ 
	$$
	\begin{aligned}
	D_{3} \leq C & \sup _{k_{0} \in \mathbb{Z}} 2^{-k_{0} \lambda}\left\{\sum_{k=-\infty}^{k_{0}} \sum_{j=k+3}^{\infty} 2^{j \alpha p}\left\|f_{j}\right\|_{L^{\vec{q}}}^{p}(j-k)^{p} 2^{-(j-k)\left(\alpha+\sum\limits_{i=1}^n\frac{1}{q_{i}}\right)p}\right\}^{1 / p} \\
	\leq & C \sup _{k_{0} \in \mathbb{Z}} 2^{-k_{0} \lambda}\left\{\sum_{k=-\infty}^{k_0} \sum_{j=k+3}^{k_{0}} 2^{j \alpha p}\left\|f_{j}\right\|_{L^{\vec{q}}}^{p}(j-k)^{p} 2^{-(j-k)\left(\alpha+\sum\limits_{i=1}^n\frac{1}{q_{i}}\right) p}\right\}^{1 / p} \\
	&+C \sup _{k_{0} \in \mathbb{Z}} 2^{-k_{0} \lambda}\left\{\sum_{k=-\infty}^{k_{0}} \sum_{j=k_{0}+1}^{\infty} 2^{j \alpha p}\left\|f_{j}\right\|_{L^{\vec{q}}}^{p}(j-k)^{p} 2^{-(j-k)\left(\alpha+\sum\limits_{i=1}^n\frac{1}{q_{i}}\right) p}\right\}^{1 / p} \\
:= & C\left(K_{1}+K_{2}\right) .
	\end{aligned}
	$$
	For $K_1$
	$$
	\begin{aligned}
	K_{1} &=C \sup _{k_{0} \in \mathbb{Z}} 2^{-k_{0} \lambda}\left\{\sum_{j=-\infty}^{k_{0}} 2^{j \alpha p}\left\|f_{j}\right\|_{L^{\vec{q}}}^{p} \sum_{k=-\infty}^{j-3}(j-k)^{p} 2^{-(j-k)\left(\alpha+\sum\limits_{i=1}^n\frac{1}{q_{i}}\right)  p}\right\}^{1 / p} \\
& \leq C \sup _{k_{0} \in \mathbb{Z}} 2^{-k_{0} \lambda}\left\{\sum_{k=-\infty}^{k_{0}} 2^{k \alpha p}\left\|f_{k}\right\|_{L^{\vec{q}}}^{p}\right\}^{1 / p} \\
&\leq C\|f\|_{M \dot{K}_{p, \vec{q}}^{\alpha, \lambda}} .
	\end{aligned}
	$$
	For $K_2$, note the obvious inequality $\left\|f_{j}\right\|_{L^{\vec{q}}}^{p} \leq 2^{-j \alpha p} \sum_{l=-\infty}^{j} 2^{l \alpha p}\left\|f_{l}\right\|_{L^{\vec{q}}}^{p}$,
$$
\begin{aligned}
K_{2} &=C \sup _{k_{0} \in \mathbb{Z}} 2^{-k_{0} \lambda}\left\{\sum_{k=-\infty}^{k_{0}} 2^{\left(kp\alpha+kp\sum\limits_{i=1}^n\frac{1}{q_{i}}\right)} \sum_{j=k_{0}+1}^{\infty}(j-k)^{p} 2^{-jp\sum\limits_{i=1}^n\frac{1}{q_{i}}}\left\|f_{j}\right\|_{L^{\vec{q}}}^{p}\right\}^{1 / p} \\
& \leq C \sup _{k_{0} \in \mathbb{Z}} 2^{-k_{0} \lambda}\left\{\sum_{k=-\infty}^{k_{0}} 2^{\left(kp\alpha+kp\sum\limits_{i=1}^n\frac{1}{q_{i}}\right)} \sum_{j=k_{0}+1}^{\infty}(j-k)^{p} 2^{-j p\sum\limits_{i=1}^n\frac{1}{q_{i}}} 2^{-j \alpha p} \sum_{l=-\infty}^{j} 2^{l \alpha p}\left\|f_{i}\right\|_{L^{\vec{q}}}^{p}\right\}^{1 / p} \\
\end{aligned}
$$
$$
\begin{aligned}\quad \quad ~~
&=C \sup _{k_{0} \in \mathbb{Z}} 2^{-k_{0} \lambda}\left\{\sum_{k=-\infty}^{k_{0}} 2^{\left(kp\alpha+kp\sum\limits_{i=1}^n\frac{1}{q_{i}}\right)} \sum_{j=k_{0}+1}^{\infty}(j-k)^{p} 2^{-j \sum\limits_{i=1}^n\frac{1}{q_{i}}} 2^{-j \alpha p} 2^{j \lambda p}\left[2^{-j \lambda}\left(\sum_{l=-\infty}^{j} 2^{l \alpha p}\left\|f_{l}\right\|_{L^{\vec{q}}}^{p}\right)^{1 / p}\right]^{p}\right\}^{1 / p} \\
& \leq C \sup _{k_{0} \in \mathbb{Z}} 2^{-k_{0} \lambda}\left\{\sum_{k=-\infty}^{k_{0}} 2^{k \alpha p}\left\|f_{k}\right\|_{L^{\vec{q}}}^{p}\right\}^{1 / p} \\
&\leq C\|f\|_{M \dot{K}_{p, \vec{q}}^{\alpha,\lambda}},
\end{aligned}
$$
case: $1<p< \infty$
$$
\begin{aligned}
	D_{3} 
	\leq & C \sup _{k_{0} \in \mathbb{Z}} 2^{-k_{0} \lambda}\left\{\sum_{k=-\infty}^{k_0} \left[\sum_{j=k+3}^{k_{0}} 2^{j \alpha }\left\|f_{j}\right\|_{L^{\vec{q}}}(j-k) 2^{-(j-k)\left(\alpha+\sum\limits_{i=1}^n\frac{1}{q_{i}}\right) }\right]^p\right\}^{1 / p} \\
	\end{aligned}
	$$
	$$
	\begin{aligned}
	\quad \quad\quad \quad\quad \quad&+C \sup _{k_{0} \in \mathbb{Z}} 2^{-k_{0} \lambda}\left\{\sum_{k=-\infty}^{k_{0}} \left[\sum_{j=k_{0}+1}^{\infty} 2^{j \alpha }\left\|f_{j}\right\|_{L^{\vec{q}}}(j-k) 2^{-(j-k)\left(\alpha+\sum\limits_{i=1}^n\frac{1}{q_{i}}\right) }\right]^p\right\}^{1 / p} \\
	:= & C\left(E_{1}+E_{2}\right) .
\end{aligned}
$$
For$~E_1$
$$
\begin{aligned}
	E_{1} & \leq \sup _{k_{0} \in \mathbb{Z}} 2^{-k_{0} \lambda}\left\{\sum_{k=-\infty}^{k_{0}}\left[\sum_{j=k+3}^{k_{0}} 2^{j \alpha p}\left\|f_{j}\right\|_{L^{\vec{q}}}^{p} 2^{-(j-k)\left(\alpha+\sum\limits_{i=1}^n\frac{1}{q_{i}} \right) p / 2}\right]\left[\sum_{j=k+3}^{k_{0}}(j-k)^{p^{\prime}} 2^{-(j-k)\left(\alpha+\sum\limits_{i=1}^n\frac{1}{q_{i}} \right) p^{\prime} / 2}\right]^{p / p^{\prime}}\right\}^{1 / p} \\
	& \leq C \sup _{k_{0} \in \mathbb{Z}} 2^{-k_{0} \lambda}\left\{\sum_{k=-\infty}^{k_{0}} \sum_{j=k+3}^{k_{0}} 2^{j \alpha p}\left\|f_{j}\right\|_{L^{\vec{q}}}^{p} 2^{-(j-k)\left(\alpha+\sum\limits_{i=1}^n\frac{1}{q_{i}} \right) p / 2}\right\}^{1 / p} \\
	& \leq C \sup _{k_{0} \in \mathbb{Z}} 2^{-k_{0} \lambda}\left\{\sum_{j=-\infty}^{k_{0}} 2^{j \alpha p}\left\|f_{j}\right\|_{L^{\vec{q}}}^{p} \sum_{k=-\infty}^{j-3} 2^{-(j-k)\left(\alpha+\sum\limits_{i=1}^n\frac{1}{q_{i}} \right) p / 2}\right\}^{1 / p} \\
& \leq C \sup _{k_{0} \in \mathbb{Z}} 2^{-k_{0} \lambda}\left\{\sum_{k=-\infty}^{k_{0}} 2^{k \alpha p}\left\|f_{k}\right\|_{L^{\vec{q}}}^{p}\right\}^{1 / p} \\
&\leq C\|f\|_{M \dot{K}_{p, \vec{q}}^{\alpha,\lambda}} .
\end{aligned}
$$
For $E_{2}$, notice the obvious inequality $\left\|f_{j}\right\|_{L^{q}\left(\mathbb{R}^{n}\right)}^{p} \leq 2^{-j \alpha p} \sum_{l=-\infty}^{j} 2^{l \alpha p}\left\|f_{l}\right\|_{L^{q}\left(\mathbb{R}^{n}\right)}^{p}$, 
$$
\begin{aligned}
E_{2} &=\sup _{k_{0} \in \mathbb{Z}} 2^{-k_{0} \lambda}\left\{\sum_{k=-\infty}^{k_{0}}\left(\sum_{j=k_{0}+1}^{\infty} 2^{j \alpha}\left\|f_{j}\right\|_{L^{\vec{q}}} 2^{(k-j)\left(\alpha+\sum\limits_{i=1}^n\frac{1}{q_{i}} +\lambda\right) / 2}(j-k) 2^{(k-j)\left(\alpha+\sum\limits_{i=1}^n\frac{1}{q_{i}} -\lambda\right) / 2}\right)^{p}\right\}^{1 / p} \\
& \leq C \sup _{k_{0} \in \mathbb{Z}} 2^{-k_{0} \lambda}\left\{\sum_{k=-\infty}^{k_{0}}\left[\sum_{j=k_{0}+1}^{\infty} 2^{j \alpha p}\left\|f_{j}\right\|_{L^{\vec{q}}}^{p} 2^{  \frac{-(j-k)\left(\alpha+\sum\limits_{i=1}^n\frac{1}{q_{i}} +\lambda\right) p}{2}}\right]\left[\sum_{j=k_{0}+1}^{\infty}(j-k)^{p^{\prime}} 2^{\frac{-(j-k)\left(\alpha+\sum\limits_{i=1}^n\frac{1}{q_{i}} -\lambda\right) p^{\prime} }{2}}\right]^{\frac{p}{p^{\prime}} }\right\}^{\frac{1 }{p}} \\
& \leq C \sup _{k_{0} \in \mathbb{Z}} 2^{-k_{0} \lambda}\left\{\sum_{k=-\infty}^{k_{0}} \sum_{j=k_{0}+1}^{\infty} 2^{j \alpha p}\left\|f_{j}\right\|_{L^{\vec{q}}}^{p} 2^{-(j-k)\left(\alpha+\sum\limits_{i=1}^n\frac{1}{q_{i}} +\lambda\right) p / 2}\right\}^{1 / p} \\
& \leq C \sup _{k_{0} \in \mathbb{Z}} 2^{-k_{0} \lambda}\left\{\sum_{k=-\infty}^{k_{0}} \sum_{j=k_{0}+1}^{\infty} 2^{-(j-k)\left(\alpha+\sum\limits_{i=1}^n\frac{1}{q_{i}} +\lambda\right) p / 2} \sum_{l=-\infty}^{j} 2^{l \alpha p}\left\|f_{l}\right\|_{L^{\vec{q}}}^{p}\right\}^{1 / p} \\
&=C \sup _{k_{0} \in \mathbb{Z}} 2^{-k_{0} \lambda}\left\{\sum_{k=-\infty}^{k_{0}} \sum_{j=k_{0}+1}^{\infty} 2^{-(j-k)\left(\alpha+\sum\limits_{i=1}^n\frac{1}{q_{i}} +\lambda\right) p / 2} 2^{j \lambda p}\left[2^{-j \lambda}\left(\sum_{l=-\infty}^{j} 2^{l \alpha p}\left\|f_{l}\right\|_{L^{\vec{q}}}^{p}\right)^{1 / p}\right]^{p}\right\}^{1 / p} \\
&=C\leq C\|f\|_{M \dot{K}_{p, \vec{q}}^{\alpha,\lambda}} \sup _{k_{0} \in \mathbb{Z}} 2^{-k_{0} \lambda}\left\{\sum_{k=-\infty}^{k_{0}} 2^{k \lambda p} \sum_{j=k_{0}+1}^{\infty} 2^{-(j-k)\left(\alpha+\sum\limits_{i=1}^n\frac{1}{q_{i}} -\lambda\right) p / 2}\right\}^{1 / p} \\
&\leq C\|f\|_{M\dot{K}_{p,\vec{q}}^{\alpha,\lambda}} \mathop{sup}\limits_{k_{0}\in \mathbb{Z}} 2^{-k_{0}\lambda} \left\{\sum_{k=-\infty}^{k_{0}}2^{k\lambda p}
\right\}^{\frac{1}{p}}\\
\end{aligned}
$$
$
~~~~\leq C\|f\|_{M\dot{K}_{p, \vec{q}}^{\alpha,\lambda}}.
$\\
That is why $\lambda-\sum\limits_{i=1}^n\frac{1}{q_{1i}}<\alpha .$\\
This completes the proof of the theorem.
\end{proof}
\begin{theorem}
	Let $1<l<\infty$, $0\leq \lambda < \infty$, $0<p_1\leq p_2\leq \infty$, $1<\vec{q_1}<l$, $\frac{1}{l}=\frac{1}{n}\sum\limits_{i=1}^n\frac{1}{q_{1i}}-\frac{1}{n}\sum\limits_{i=1}^n\frac{1}{q_{2i}}~$ and  $\lambda-\sum\limits_{i=1}^n\frac{1}{q_{2i}}<\alpha<n-\sum\limits_{i=1}^n\frac{1}{q_{1i}},$ suppose sublinear operators $M_b^{l}f(x)$ satisfied that $M_b^{l}f(x)$ is bounded from $L^{\vec{q_1}}\left(\mathbb{R}^n\right)$ to $L^{\vec{q_2}}\left(\mathbb{R}^n\right)$and $b\in BMO(\mathbb{R}^n)$.
	Then $M_b^{l}f(x)$ is also bounded from $M\dot{K}_{p_1,\vec{q}_1}^{\alpha,\lambda}(\mathbb{R}^n)$ to $M\dot{K}_{p_2,\vec{q}_2}^{\alpha,\lambda}(\mathbb{R}^n).$
\end{theorem}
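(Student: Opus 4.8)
The plan is to run, once more, the dyadic-annulus decomposition used in Theorem 3.2 and in the preceding theorem, combining the fractional scaling of the former with the $\mathrm{BMO}$-logarithmic loss of the latter. By Proposition 2.3(1) (applied with $p_1\leq p_2$) one has $\|M_b^{l}f\|_{M\dot{K}_{p_2,\vec{q}_2}^{\alpha,\lambda}}\leq C\|M_b^{l}f\|_{M\dot{K}_{p_1,\vec{q}_2}^{\alpha,\lambda}}$, so it suffices to control the right-hand side, i.e.\ we may work with the exponent $p=p_1$; the case $p_1=\infty$ is easier and is treated exactly as indicated in the proof of Theorem 3.2, so assume $0<p_1<\infty$. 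Writing $f=\sum_{j\in\mathbb{Z}}f_j$ with $f_j=f\chi_j$, split
$$\|M_b^{l}f\|_{M\dot{K}_{p_1,\vec{q}_2}^{\alpha,\lambda}}^{p_1}\leq C\,(G_1+G_2+G_3),$$
where $G_1$, $G_2$, $G_3$ collect the indices $j\leq k-3$, $k-2\leq j\leq k+2$ and $j\geq k+3$ respectively, exactly as for $D_1$, $D_2$, $D_3$ above.

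The middle term $G_2$ is immediate: only finitely many scales overlap and $M_b^{l}$ is bounded from $L^{\vec{q}_1}$ to $L^{\vec{q}_2}$, so $\|(M_b^{l}f_j)\chi_k\|_{L^{\vec{q}_2}}\leq C\|f_j\|_{L^{\vec{q}_1}}$ and hence $G_2\leq C\|f\|_{M\dot{K}_{p_1,\vec{q}_1}^{\alpha,\lambda}}^{p_1}$, just as for $D_2$. For $G_1$, when $j\leq k-3$ and $x\in A_k$ we have $|x-y|\sim 2^k$ for every $y\in A_j$, so taking the scale $r\sim 2^k$ in the defining supremum, and writing $b_j$ for the average of $b$ over $B(0,2^j)$,
$$M_b^{l}(f_j)(x)\leq C\,2^{-kn/l'}\Big(|b(x)-b_j|\,\|f_j\|_{L^1}+\int_{A_j}|b(y)-b_j|\,|f(y)|\,dy\Big).$$
Using the $\mathrm{BMO}$ estimates $\|(b-b_j)\chi_k\|_{L^{\vec{q}_2}}\leq C(k-j)\|b\|_{\mathrm{BMO}}\|\chi_k\|_{L^{\vec{q}_2}}$ and $\int_{A_j}|b-b_j|\,|f|\leq C\|b\|_{\mathrm{BMO}}\|\chi_j\|_{L^{\vec{q}_1'}}\|f_j\|_{L^{\vec{q}_1}}$, the Hölder bound $\|f_j\|_{L^1}\leq\|\chi_j\|_{L^{\vec{q}_1'}}\|f_j\|_{L^{\vec{q}_1}}$, the volume comparisons $\|\chi_k\|_{L^{\vec{q}_2}}\sim 2^{k\sum_{i}1/q_{2i}}$ and $\|\chi_j\|_{L^{\vec{q}_1'}}\sim 2^{j(n-\sum_{i}1/q_{1i})}$, together with the identity $n/l'=n-\sum_{i}1/q_{1i}+\sum_{i}1/q_{2i}$ (from $\tfrac{1}{l}=\tfrac{1}{n}\sum_{i}1/q_{1i}-\tfrac{1}{n}\sum_{i}1/q_{2i}$), one arrives at
$$\|(M_b^{l}f_j)\chi_k\|_{L^{\vec{q}_2}}\leq C\,(k-j)\,2^{-(k-j)(n-\sum_{i}1/q_{1i})}\,\|f_j\|_{L^{\vec{q}_1}}.$$
Plugging this into $G_1$ and arguing separately in the ranges $0<p_1\leq1$ and $1<p_1<\infty$ exactly as for $D_1$ (in the second range, splitting the exponent $n-\sum_{i}1/q_{1i}-\alpha$ in half and applying Hölder so that the polynomial factor $(k-j)^{p_1'}$ is absorbed into the geometric decay) gives $G_1\leq C\|f\|_{M\dot{K}_{p_1,\vec{q}_1}^{\alpha,\lambda}}^{p_1}$; here the upper restriction $\alpha<n-\sum_{i}1/q_{1i}$ is precisely what makes $\sum_{k\geq j+3}(k-j)\,2^{(j-k)(n-\sum_{i}1/q_{1i}-\alpha)}$ finite.

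For $G_3$, when $j\geq k+3$ and $x\in A_k$ we have $|x-y|\sim 2^j$ for $y\in A_j$, so taking $r\sim 2^j$ and letting $b_k$ be the average of $b$ over $B(0,2^k)$,
$$M_b^{l}(f_j)(x)\leq C\,2^{-jn/l'}\Big(|b(x)-b_k|\,\|f_j\|_{L^1}+\int_{A_j}|b_k-b(y)|\,|f(y)|\,dy\Big),$$
and writing $b_k-b(y)=(b_k-b_j)+(b_j-b(y))$ and invoking the same $\mathrm{BMO}$ and volume estimates one gets
$$\|(M_b^{l}f_j)\chi_k\|_{L^{\vec{q}_2}}\leq C\,(j-k)\,2^{-(j-k)\sum_{i}1/q_{2i}}\,\|f_j\|_{L^{\vec{q}_1}}.$$
From this point the argument is exactly as for $D_3$: one splits the inner sum as $\sum_{k+3\leq j\leq k_0}+\sum_{j\geq k_0+1}$; the first piece is handled by reversing the order of summation (and, when $1<p_1<\infty$, by Hölder with the exponent $\alpha+\sum_{i}1/q_{2i}$ split in half, which is legitimate since $\alpha+\sum_{i}1/q_{2i}>\lambda\geq0$), yielding the analogues of $K_1$ resp.\ $E_1$; the second piece is treated, as for $K_2$ resp.\ $E_2$, by means of the elementary inequality $\|f_j\|_{L^{\vec{q}_1}}^{p_1}\leq 2^{-j\alpha p_1}\sum_{l\leq j}2^{l\alpha p_1}\|f_l\|_{L^{\vec{q}_1}}^{p_1}$, which produces a factor $2^{j\lambda p_1}\|f\|_{M\dot{K}_{p_1,\vec{q}_1}^{\alpha,\lambda}}^{p_1}$, and the remaining sum collapses to $\sup_{k_0}2^{-k_0\lambda p_1}\sum_{k\leq k_0}2^{k\lambda p_1}\leq C$; the lower restriction $\lambda-\sum_{i}1/q_{2i}<\alpha$ is exactly what keeps the relevant geometric series in $j$ convergent. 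Combining the three estimates gives $\|M_b^{l}f\|_{M\dot{K}_{p_2,\vec{q}_2}^{\alpha,\lambda}}^{p_1}\leq C\|f\|_{M\dot{K}_{p_1,\vec{q}_1}^{\alpha,\lambda}}^{p_1}$, and taking $p_1$-th roots finishes the proof.

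The main obstacle is obtaining the two off-diagonal estimates displayed above with the right exponents and the logarithmic factors $(k-j)$ and $(j-k)$ coming from $\mathrm{BMO}$ — in particular, justifying $\|(b-b_j)\chi_k\|_{L^{\vec{q}}}\leq C(k-j)\|b\|_{\mathrm{BMO}}\|\chi_k\|_{L^{\vec{q}}}$ in the mixed-norm setting — and then verifying that the resulting double sums still converge under exactly the stated range $\lambda-\sum_{i}1/q_{2i}<\alpha<n-\sum_{i}1/q_{1i}$; when $1<p_1<\infty$ this is where the half-exponent Hölder splitting (used for $D_1$, $D_3$, $E_1$, $E_2$ above) is indispensable, since it is what lets the polynomial factors $(k-j)^{p_1'}$, $(j-k)^{p_1'}$ be swallowed by the geometric terms.
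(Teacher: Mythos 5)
Your proposal is correct and follows essentially the same route as the paper: the reduction via Proposition 2.3 to the exponent $p_1$, the three-fold dyadic decomposition, the $L^{\vec{q}_1}\to L^{\vec{q}_2}$ boundedness for the diagonal block, and the two off-diagonal BMO estimates $\|(M_b^{l}f_j)\chi_k\|_{L^{\vec{q}_2}}\leq C(k-j)2^{-(k-j)(n-\sum_i 1/q_{1i})}\|f_j\|_{L^{\vec{q}_1}}$ and $\leq C(j-k)2^{-(j-k)\sum_i 1/q_{2i}}\|f_j\|_{L^{\vec{q}_1}}$, handled by the same half-exponent H\"older splitting and the inequality $\|f_j\|_{L^{\vec{q}_1}}^{p_1}\leq 2^{-j\alpha p_1}\sum_{l\leq j}2^{l\alpha p_1}\|f_l\|_{L^{\vec{q}_1}}^{p_1}$ under exactly the stated restrictions on $\alpha$. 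In fact your derivation of the two off-diagonal estimates (via $n/l'=n-\sum_i 1/q_{1i}+\sum_i 1/q_{2i}$ and the volume comparisons) is spelled out more explicitly than in the paper, which merely asserts them and refers back to the preceding theorem.
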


\begin{proof}
	Assuming $f \in M \dot{K}_{p, \vec{q}}^{\alpha, \lambda}\left(\mathbb{R}^{n}\right)$,
	$$
	f(x)=\sum_{j=-\infty}^{\infty} f(x) \chi_{j}(x) \equiv \sum_{j=-\infty}^{\infty} f_{j}(x) .
	$$
	the result is concluded through the Proposition 2.3,
	$$	\|M_b^{l}f\|_{M\dot{K}_{p_2,\vec{q}_2}^{\alpha,\lambda}}\leq \|M_b^{l}f\|_{M\dot{K}_{p_1,\vec{q}_2}^{\alpha,\lambda}}.
	$$
	Furthermore
	$$
	\begin{aligned} 
	\|M_b^{l}f\|_{M\dot{K}_{p_2,\vec{q}_2}^{\alpha,\lambda}}^{p_1}
	&\leq\mathop{sup}\limits_{k_{0}\in \mathbb{Z}} 2^{-k_{0}\lambda p_1} \left\{\sum_{k=-\infty}^{k_{0}}2^{k \alpha p_1}
	\left\| M_b^{l}f\chi_{k} \right\| _{L^{\vec{q}_2}}^{p_1} \right\}\\
	&\leq C\mathop{sup}\limits_{k_{0}\in \mathbb{Z}} 2^{-k_{0}\lambda p_1} \left\{\sum_{k=-\infty}^{k_{0}}2^{k \alpha p_1}\sum ^{k-3}_{j=-\infty}
	\left\| M_b^{l}(f_{j})(\cdot)\chi_{k}(\cdot) \right\| _{L^{\vec{q}_{2}}}^{p_{1}}\right\}\\
	&\quad +C\mathop{sup}\limits_{k_{0}\in \mathbb{Z}} 2^{-k_{0}\lambda p_1} \left\{\sum_{k=-\infty}^{k_{0}}2^{k \alpha p_1}\sum ^{k+2}_{j=k-2}
	\left\| M_b^{l}(f_{j})(\cdot)\chi_{k}(\cdot) \right\| _{L^{\vec{q}_{2}}}^{p_{1}}\right\}\\
	\end{aligned}
	$$
	$$
	\begin{aligned}
	\quad\quad\quad\quad\quad\quad
	&\quad +C\mathop{sup}\limits_{k_{0}\in \mathbb{Z}} 2^{-k_{0}\lambda p_1} \left\{\sum_{k=-\infty}^{k_{0}}2^{k \alpha p_1}\sum ^{\infty}_{j=k+3}
	\left\| M_b^{l}(f_{j})(\cdot)\chi_{k}(\cdot) \right\|_{L^{\vec{q}_{2}}}^{p_{1}}\right\}\\
	&:=C\left(F_1+F_2+F_3\right).
	\end{aligned}
	$$
	For  $F_{2}$, $M_b^{l}$ is bounded from $L^{\vec{q_1}}\left(\mathbb{R}^n\right)$ to $L^{\vec{q_2}}\left(\mathbb{R}^n\right)$\cite{TN2017},\\
	$$
	\begin{aligned}
	F_2&\leq C\mathop{sup}\limits_{k_{0}\in \mathbb{Z}} 2^{-k_{0}\lambda p_1} \left\{\sum_{k=-\infty}^{k_{0}}2^{k \alpha p_1}\left\|\left|f_{j}\right| \right\|_{L^{\vec{q}_{1}}} ^{p_{1}}\right\}
	&\leq C\|f\|_{M\dot{K}_{p_1,\vec{q}_1}^{\alpha,\lambda}}^{p_1}.
	\end{aligned}
	$$
For $D_1$, note that $j \leq k-3$, it can be deduced
$$
\begin{aligned}
\left\|M_b^{l} f_{j} \chi_{k}\right\|_{L^{\vec{q}_2}}
&\leq C\|b\|_{\mathrm{BMO}\left(\mathbb{R}^{n}\right)} 2^{(j-k) n\left(1-\frac{1}{n}\sum\limits_{i=1}^n\frac{1}{q_{i}}\right)}(k-j)\left\|f_{j}\right\|_{L^{\vec{q}_1}}\\ 
&\leq C 2^{(j-k) n\left(1-\frac{1}{n}\sum\limits_{i=1}^n\frac{1}{q_{i}}\right)}(k-j)\left\|f_{j}\right\|_{L^{\vec{q}_1}}.
\end{aligned}
$$
So repeat the same process as $D_1$,
$$
F_1 \leq C\mathop{sup}\limits_{k_{0}\in \mathbb{Z}} 2^{-k_{0}\lambda p_1} \left\{\sum_{k=-\infty}^{k_{0}}2^{k \alpha p_1} \sum_{j=-\infty}^{k-3} (k-j)^{p_1}2^{(j-k)n\left(1- \frac{1}{n} \sum_{i=1}^{\infty}\frac{1}{q_i}\right) p_1}\left\|\left|f_{j}\right| \right\|_{L^{\vec{q}_{1}}} ^{p_{1}}\right\}
\leq C\|f\|^{p_1}_{M\dot{K}_{p_1,\vec{q}_1}^{\alpha,\lambda}}.
$$
That is also why $\alpha<n\left(1-\frac{1}{n}\sum\limits_{i=1}^n\frac{1}{q_i}\right). $
$F_3$ uses the same way, $j>k+2$ , $x\in A_k,$
$$
\begin{aligned}
\left\|M_b^{l} f_{j} \chi_{k}\right\|_{L^{\vec{q}_2}}
 & \leq C2^{-jn/l^{\prime}} \left\| \int_{A_{j}} |b(x)-b(y)| ~|f(y)| dy \chi_{k} \right\|_{L^{\vec{q}_2}}
&\leq C 2^{(k-j) \sum\limits_{i=1}^n\frac{1}{q_{i}}}(j-k)\left\|f_{j}\right\|_{L^{\vec{q}}}.
\end{aligned}
$$
Then
$$
\begin{aligned}
F_1 &\leq C\mathop{sup}\limits_{k_{0}\in \mathbb{Z}} 2^{-k_{0}\lambda p_1} \left\{\sum_{k=-\infty}^{k_{0}}2^{k \alpha p_1} \sum_{j=-\infty}^{k-3} (j-k)^{p_1}2^{(k-j) p_1 \sum_{i=1}^{\infty}\frac{1}{q_i}}\left\|\left|f_{j}\right| \right\|_{L^{\vec{q}_{1}}} ^{p_{1}}\right\}\\
&= C \mathop{sup}\limits_{k_{0}\in \mathbb{Z}} 2^{-k_{0}\lambda p_1} \left\{\sum_{k=-\infty}^{k_{0}} \sum_{j=k+3}^{\infty} 2^{j \alpha p_1} (j-k)^{p_1}2^{(k-j)p_1 \left( \alpha
	 + \sum_{i=1}^{\infty}\frac{1}{q_i}\right) }\left\|\left|f_{j}\right| \right\|_{L^{\vec{q}_{1}}} ^{p_{1}}\right\}.
\end{aligned}
$$
The later proof is similar to the previous one, so it only explains the idea of proof. When $0 <p_1 \leq 1 $, using this inequality $\left(\sum_{i=- \infty}^{\infty} |a_j|\right)^p \leq \sum_{i=- \infty}^{\infty} |a_j|^p$. And the obvious inequality $\left\|f_{j}\right\|_{L^{q}\left(\mathbb{R}^{n}\right)}^{p} \leq 2^{-j \alpha p} \sum_{l=-\infty}^{j} 2^{l \alpha p}\left\|f_{l}\right\|_{L^{q}\left(\mathbb{R}^{n}\right)}^{p}$. We can prove that $F_3\leq C\|f\|^{p_1}_{M\dot{K}_{p_1,\vec{q}_1}^{\alpha,\lambda}}$. When $1<p_1 < \infty$, we divide $F_3$ into two parts,
$$
\begin{aligned}
F_3 &\leq C \mathop{sup}\limits_{k_{0}\in \mathbb{Z}} 2^{-k_{0}\lambda p_1} \left\{\sum_{k=-\infty}^{k_{0}} \sum_{j=k+3}^{k_0} 2^{j \alpha p_1} (j-k)^{p_1}2^{(k-j)p_1 \left( \alpha + \sum_{i=1}^{\infty}\frac{1}{q_i}\right) }\left\|\left|f_{j}\right| \right\|_{L^{\vec{q}_{1}}} ^{p_{1}}\right\}\\
&+ C \mathop{sup}\limits_{k_{0}\in \mathbb{Z}} 2^{-k_{0}\lambda p_1} \left\{\sum_{k=-\infty}^{k_{0}} \sum_{j=k_0 +3}^{\infty} 2^{j \alpha p_1} (j-k)^{p_1}2^{(k-j)p_1 \left( \alpha + \sum_{i=1}^{\infty}\frac{1}{q_i}\right) }\left\|\left|f_{j}\right| \right\|_{L^{\vec{q}_{1}}} ^{p_{1}}\right\}\\
&:=C(G_1+G_2).
\end{aligned}
$$
For $G_1$ and $G_2$, the H$\ddot{o}$lder  inequality and the  inequality $\left\|f_{j}\right\|_{L^{\vec{q}}}^{p} \leq 2^{-j \alpha p} \sum_{l=-\infty}^{j} 2^{l \alpha p}\left\|f_{l}\right\|_{L^{\vec{q}}}^{p}$ are used respectively. The $G_1\leq C\|f\|^{p_1}_{M\dot{K}_{p_1,\vec{q}_1}^{\alpha,\lambda}}$ and $G_2\leq C\|f\|^{p_1}_{M\dot{K}_{p_1,\vec{q}_1}^{\alpha,\lambda}}$ can be  demonstrated. 
In conclusion,
$$
\|M_b^{l}f\|^{p_1}_{M\dot{K}_{p_2,\vec{q}_2}^{\alpha,\lambda}}\leq C\|f\|^{p_1}_{M\dot{K}_{p_1,\vec{q}_1}^{\alpha,\lambda}}.
$$
This completes the proof of the theorem.
\end{proof}
Now the main theorem is provided, which reflects the boundedness of commutators generated by sublinear operators and BMO functions on mixed Herz-Morrey spaces. The proof is similar to the above method, so it is left for the reader to prove.

\begin{theorem}
	Let $0\leq \lambda <\infty$, $0<p< \infty$, $1<\vec{q}< \infty$ and $\lambda-\sum\limits_{i=1}^n \frac{1}{q_i} <\alpha<n\left(1-\frac{1}{n}\sum\limits_{i=1}^n\frac{1}{q_i}\right),$ suppose a sublinear operators $[b,T]$ satisfied that\\
	$(1)$ $b\in BMO(\mathbb{R}^n)$ and  $[b,T]$ is bounded on $L^{\vec{q}}\left(\mathbb{R}^n\right);$\\
	$(2)$ for suitable functions $f$ with $suppf\subseteq A_k$ and $|x|\geq2^{k+1}$ with $k\in \mathbb {Z},$
	\begin{equation}
	\mid Tf(x)\mid\leq C\frac{\|f\|_{L^1}}{|x|^n} ;\label{5.6}
	\end{equation}
	$(3)$ for suitable functions $f$ with $suppf\subseteq A_k$ and $|x|\leq 2^{k-2}$ with $k\in \mathbb {Z},$
	\begin{equation}
	\mid Tf(x)\mid\leq C\frac{\|f\|_{L^1}}{2^{nk}} .\label{5.7}
	\end{equation}
	Then $[b,T]$ is also bounded on $M\dot{K}_{p,\vec{q}}^{\alpha,\lambda}(\mathbb{R}^n)$.
\end{theorem}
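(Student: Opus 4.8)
The plan is to follow the scheme of Theorem 3.1 and Theorem 4.1, the only new ingredient being the standard commutator splitting. Fix $f\in M\dot K_{p,\vec q}^{\alpha,\lambda}(\mathbb R^n)$, write $f=\sum_{j\in\mathbb Z}f\chi_j=\sum_{j\in\mathbb Z}f_j$, so that $|[b,T]f|\le\sum_j|[b,T]f_j|$. For each fixed $k$ one splits the inner sum over $j$ into the three ranges $j\le k-3$, $k-2\le j\le k+2$, and $j\ge k+3$; this produces three terms $D_1,D_2,D_3$ in the quantity defining $\|[b,T]f\|_{M\dot K_{p,\vec q}^{\alpha,\lambda}}$, exactly as in the proof of Theorem 4.1. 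For $D_2$, hypothesis $(1)$ --- boundedness of $[b,T]$ on $L^{\vec q}(\mathbb R^n)$ --- allows one to replace $\|[b,T]f_j\,\chi_k\|_{L^{\vec q}}$ by $\|f_j\|_{L^{\vec q}}$; the finite inner sum then collapses and $D_2\le C\|f\|_{M\dot K_{p,\vec q}^{\alpha,\lambda}}$.

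For $D_1$ the key step is a pointwise estimate. For $j\le k-3$ and $x\in A_k$, with $b_k$ the mean of $b$ over $B_k$, one uses
$$|[b,T]f_j(x)|\le|b(x)-b_k|\,|Tf_j(x)|+\big|T\big((b-b_k)f_j\big)(x)\big|,$$
applies \eqref{5.6} to $Tf_j$ and to $T((b-b_k)f_j)$ (both supported in $A_j$, while $|x|\ge2^{k-1}$), and combines this with the elementary $\mathrm{BMO}$ facts $\|(b-b_k)\chi_k\|_{L^{\vec q}}\le C\|b\|_{\mathrm{BMO}}\|\chi_k\|_{L^{\vec q}}$, $|b_j-b_k|\le C(k-j)\|b\|_{\mathrm{BMO}}$, a John--Nirenberg bound on $L^{\vec q'}$, and $\|f_j\|_{L^1}\le C\|f_j\|_{L^{\vec q}}\|\chi_j\|_{L^{\vec q'}}$. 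This yields precisely the estimate entering $D_1$ in Theorem 4.1,
$$\big\|[b,T]f_j\,\chi_k\big\|_{L^{\vec q}}\le C\,2^{(j-k)n\left(1-\frac1n\sum_{i=1}^n\frac1{q_i}\right)}(k-j)\,\|f_j\|_{L^{\vec q}}.$$
From here one splits into the cases $0<p\le1$ (using $(\sum|a_j|)^p\le\sum|a_j|^p$) and $1<p<\infty$ (splitting the geometric weight into two equal halves and applying H\"older in $j$), interchanges the summations over $k$ and $j$, and sums the geometric series in $k-j$; the polynomial factor $(k-j)^p$, resp.\ $(k-j)^{p'}$, is absorbed because $\alpha<n\big(1-\frac1n\sum_{i=1}^n\frac1{q_i}\big)$ forces the geometric ratio to be $<1$. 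Hence $D_1\le C\|f\|_{M\dot K_{p,\vec q}^{\alpha,\lambda}}$.

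For $D_3$ one proceeds symmetrically: for $j\ge k+3$ and $x\in A_k$ one has $|x|\le2^{j-2}$, so the same splitting together with \eqref{5.7} and the $\mathrm{BMO}$ bounds gives
$$\big\|[b,T]f_j\,\chi_k\big\|_{L^{\vec q}}\le C\,2^{(k-j)\sum_{i=1}^n\frac1{q_i}}(j-k)\,\|f_j\|_{L^{\vec q}}.$$
Inserting this and splitting the inner sum over $j$ into $k+3\le j\le k_0$ and $j\ge k_0+1$ gives two pieces $K_1$ and $K_2$. For $K_1$ one interchanges the summations and sums the geometric series in $j-k$, the polynomial factor being harmless, obtaining $C\|f\|_{M\dot K_{p,\vec q}^{\alpha,\lambda}}$. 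For $K_2$, in which the decomposition index $j$ runs above the Morrey level $k_0$, one uses $\|f_j\|_{L^{\vec q}}^p\le 2^{-j\alpha p}\sum_{l\le j}2^{l\alpha p}\|f_l\|_{L^{\vec q}}^p$ to extract the factor $2^{-j\lambda}\big(\sum_{l\le j}2^{l\alpha p}\|f_l\|_{L^{\vec q}}^p\big)^{1/p}\le\|f\|_{M\dot K_{p,\vec q}^{\alpha,\lambda}}$, reducing $K_2$ to bounding $\sup_{k_0}2^{-k_0\lambda}\big\{\sum_{k\le k_0}2^{k\lambda p}\sum_{j\ge k_0+1}2^{-(j-k)\left(\alpha+\sum_{i=1}^n\frac1{q_i}-\lambda\right)p/2}\big\}^{1/p}$; the inner series converges exactly when $\lambda-\sum_{i=1}^n\frac1{q_i}<\alpha$, which is why that lower restriction on $\alpha$ is imposed, and then $\sum_{k\le k_0}2^{k\lambda p}\lesssim 2^{k_0\lambda p}$ for $\lambda>0$ (the case $\lambda=0$ reduces to the Herz-space statement, as in Remark~3.1). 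Collecting $D_1+D_2+D_3$ yields $\|[b,T]f\|_{M\dot K_{p,\vec q}^{\alpha,\lambda}}\le C\|f\|_{M\dot K_{p,\vec q}^{\alpha,\lambda}}$.

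We expect the main obstacle to be the piece $K_2$ of $D_3$: one has to cut the dyadic decomposition at the Morrey threshold $k_0$, dominate the tail $j>k_0$ by the monotone partial sums $\sum_{l\le j}2^{l\alpha p}\|f_l\|_{L^{\vec q}}^p$, and check that once $\|f\|_{M\dot K_{p,\vec q}^{\alpha,\lambda}}$ is pulled out the residual double geometric series --- including the logarithmic $\mathrm{BMO}$ factors $(j-k)$ --- still converges; this is the only step where both two-sided restrictions on $\alpha$ are genuinely used. The remaining estimates are routine repetitions of the computations already carried out for Theorems 3.1 and 4.1.
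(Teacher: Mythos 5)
Your proposal is correct and follows exactly the route the paper intends: the authors omit this proof, stating only that it is ``similar to the above method,'' and your argument is precisely the combination of the dyadic decomposition and three-range splitting of Theorem 3.1 with the commutator estimates $\|[b,T]f_j\,\chi_k\|_{L^{\vec q}}\le C\,2^{(j-k)n(1-\frac1n\sum_i 1/q_i)}(k-j)\|f_j\|_{L^{\vec q}}$ (for $j\le k-3$) and its counterpart for $j\ge k+3$ already established in the proof of Theorem 4.1, including the cut of the tail sum at $k_0$ and the use of $\|f_j\|_{L^{\vec q}}^p\le 2^{-j\alpha p}\sum_{l\le j}2^{l\alpha p}\|f_l\|_{L^{\vec q}}^p$ to extract the Morrey norm. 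No genuinely different idea is introduced, and the two-sided restriction on $\alpha$ is used in the same places as in the paper's Theorems 3.1 and 4.1.
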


\begin{corollary}
	Let $0\leq \lambda <\infty$, $0<p< \infty$, $1<\vec{q}< \infty$ and $\lambda-\sum\limits_{i=1}^n \frac{1}{q_i} <\alpha<n\left(1-\frac{1}{n}\sum\limits_{i=1}^n\frac{1}{q_i}\right)$, $~b\in BMO(\mathbb{R}^n)$.  If sublinear operators T satisfied the following condition\\
	\begin{equation}
	\left|Tf(x)\right|\leq C\int_{\mathbb{R}}\frac{\left|f(x)\right|}{\left|x-y\right|^n}dy,\quad \quad f\in L^1(\mathbb{R}^n)  \mbox{ with compact support}\quad x\notin suppf,\label{10}
	\end{equation}
	And if $[b,T]$ is bounded on $L^{\vec{q}}\left(\mathbb{R}^n\right)$, then $[b,T]$ is also bounded on $M\dot{K}_{p,\vec{q}}^{\alpha,\lambda}(\mathbb{R}^n).$
\end{corollary}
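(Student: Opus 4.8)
The proof reduces, via Theorem 4.3, to a routine verification of its remaining hypotheses. First I would note that condition $(1)$ of Theorem 4.3 is exactly what is assumed here, namely $b\in\mathrm{BMO}(\mathbb R^n)$ together with the boundedness of $[b,T]$ on $L^{\vec q}(\mathbb R^n)$. It therefore suffices to show that the size condition \eqref{10} on $T$ implies the two off-diagonal kernel bounds \eqref{5.6} and \eqref{5.7}; granting this, Theorem 4.3 applies verbatim and yields that $[b,T]$ is bounded on $M\dot K_{p,\vec q}^{\alpha,\lambda}(\mathbb R^n)$ in the range $\lambda-\sum_{i=1}^n\frac1{q_i}<\alpha<n\left(1-\frac1n\sum_{i=1}^n\frac1{q_i}\right)$. (Recall that Theorem 4.3 itself is proved, as the paper indicates, by the same annular decomposition $f=\sum_j f\chi_j$ used for Theorems~\ref{Th5.1} and 4.1, with the extra factor $|k-j|$ produced by $\|b\|_{\mathrm{BMO}}$ absorbed into the geometric tails.)

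To obtain \eqref{5.6}, fix $f$ with $\operatorname{supp}f\subseteq A_k$ and take $|x|\ge 2^{k+1}$. For $y\in A_k$ one has $|y|\le 2^k\le\tfrac12|x|$, hence $|x-y|\ge|x|-|y|\ge\tfrac12|x|$, and \eqref{10} gives
$$
|Tf(x)|\le C\int_{A_k}\frac{|f(y)|}{|x-y|^{n}}\,dy\le \frac{C\,2^{n}}{|x|^{n}}\,\|f\|_{L^1},
$$
which is \eqref{5.6}. To obtain \eqref{5.7}, fix $f$ with $\operatorname{supp}f\subseteq A_k$ and take $|x|\le 2^{k-2}$. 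For $y\in A_k$ one has $|y|\ge 2^{k-1}$, hence $|x-y|\ge|y|-|x|\ge 2^{k-1}-2^{k-2}=2^{k-2}$, so that
$$
|Tf(x)|\le C\int_{A_k}\frac{|f(y)|}{|x-y|^{n}}\,dy\le \frac{C\,2^{2n}}{2^{nk}}\,\|f\|_{L^1},
$$
which is \eqref{5.7}. Hence hypotheses $(2)$ and $(3)$ of Theorem 4.3 hold and the corollary follows.

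The argument has essentially no obstacle: the only thing one must watch is the geometric separation between the support annulus $A_k$ and the region occupied by $x$ in each of the two regimes, which is exactly why the thresholds $2^{k+1}$ and $2^{k-2}$ appear in \eqref{5.6}--\eqref{5.7}; this is the same mechanism used to deduce Corollary 3.1 from Theorem~\ref{Th5.1}. In particular, the commutators of Calder\'on--Zygmund operators, of R.~Fefferman's singular integrals, and of Bochner--Riesz means at the critical index all fall under this corollary, since each underlying $T$ satisfies \eqref{10} and the corresponding $[b,T]$ is bounded on $L^{\vec q}(\mathbb R^n)$ for $b\in\mathrm{BMO}$.
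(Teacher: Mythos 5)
Your proposal is correct and matches the paper's (implicit) intent exactly: the corollary is stated without proof as an immediate consequence of Theorem 4.3, and the only content is the verification that the size condition \eqref{10} forces the two off-diagonal bounds \eqref{5.6} and \eqref{5.7}, which you carry out correctly using the separation $|x-y|\geq \tfrac12|x|$ when $|x|\geq 2^{k+1}$ and $|x-y|\geq 2^{k-2}$ when $|x|\leq 2^{k-2}$. No gaps.
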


\begin{remark}
	It is worth noting that \eqref{10} is satisfied by many operators studied in harmonic analysis, such as Calder$\acute{o}$n-Zygmund operators, Hardy-Littlewood maximal operators, R.Fefferman’s singular integral operators and Bochner-Riesz means at the critical index and so on.
\end{remark}

\begin{theorem}
	Let $0<l<n$, $0\leq \lambda < \infty$, $0<p_1\leq p_2\leq \infty$, $1<\vec{q_1}<\frac{1}{l}$,   $l=\frac{1}{n}\sum\limits_{i=1}^n\frac{1}{q_{1i}}-\frac{1}{n}\sum\limits_{i=1}^n\frac{1}{q_{2i}}~$ and   $\lambda-\sum\limits_{i=1}^n\frac{1}{q_{2i}}<\alpha<n-\sum\limits_{i=1}^n\frac{1}{q_{1i}},$ suppose a sublinear operators $T_l$ satisfied that\\
	$(1)$ $b\in BMO(\mathbb{R}^n)$ and  $[b,T_l]$ is bounded from $L^{\vec{q_1}}\left(\mathbb{R}^n\right)$ to $L^{\vec{q_2}}\left(\mathbb{R}^n\right) ;$\\
	$(2)$ for suitable functions $f$ with $suppf\subset A_j$ and $|x|\geq2^{j+1}$ with $j\in \mathbb {Z},$
	\begin{equation}
	\mid T_lf(x)\mid\leq C\frac{\|f\|_{L^1}}{|x|^{n-l}} ;\label{5.8}
	\end{equation}
	$(3)$ for suitable functions $f$ with $suppf\subset A_j$ and $|x|\leq 2^{j-2}$ with $j\in \mathbb {Z},$
	\begin{equation}
	\mid T_lf(x)\mid\leq C\frac{\|f\|_{L^1}}{2^{j(n-l)}} .\label{5.9}
	\end{equation}
	Then $[b,T_l]$ is also bounded from $M\dot{K}_{p_1,\vec{q}_1}^{\alpha,\lambda}(\mathbb{R}^n)$ to $M\dot{K}_{p_2,\vec{q}_2}^{\alpha,\lambda}(\mathbb{R}^n).$
\end{theorem}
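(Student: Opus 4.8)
The plan is to reproduce, for the commutator $[b,T_l]$, the Herz-type decomposition already used for the off-diagonal operators $I_l$ (Theorem 3.2) and $M_b^l$ (Theorem 4.2), inserting the $BMO$ factor exactly as in the proof of the $[b,T]$ theorem (Theorem 4.3). Since $p_1\leq p_2$, Proposition 2.3 gives $\|[b,T_l]f\|_{M\dot{K}_{p_2,\vec{q}_2}^{\alpha,\lambda}}\leq\|[b,T_l]f\|_{M\dot{K}_{p_1,\vec{q}_2}^{\alpha,\lambda}}$, so it is enough to estimate the latter; write $p:=p_1$ and assume $0<p_1<\infty$, the case $p_1=\infty$ being handled analogously (and more simply) as in the earlier off-diagonal theorems. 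Fix $f\in M\dot{K}_{p_1,\vec{q}_1}^{\alpha,\lambda}(\mathbb{R}^n)$ and decompose $f=\sum_{j\in\mathbb{Z}}f_j$ with $f_j=f\chi_j$. For each $k_0\in\mathbb{Z}$ and each $k\leq k_0$ split $[b,T_l]f\,\chi_k=\bigl(\sum_{j\leq k-3}+\sum_{|j-k|\leq2}+\sum_{j\geq k+3}\bigr)[b,T_l]f_j\,\chi_k$; taking $L^{\vec{q}_2}$ quasi-norms, then the weighted $\ell^{p}$-sum over $k\leq k_0$ and finally $\sup_{k_0}2^{-k_0\lambda p}$, one arrives at $\|[b,T_l]f\|_{M\dot{K}_{p_1,\vec{q}_2}^{\alpha,\lambda}}^{p_1}\leq C(G_1+G_2+G_3)$.

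For the central term $G_2$ (indices $|j-k|\leq2$) I would invoke hypothesis (1), the $L^{\vec{q}_1}\to L^{\vec{q}_2}$ boundedness of $[b,T_l]$, to get $\|([b,T_l]f_j)\chi_k\|_{L^{\vec{q}_2}}\leq C\|f_j\|_{L^{\vec{q}_1}}$, so that $G_2\leq C\|f\|_{M\dot{K}_{p_1,\vec{q}_1}^{\alpha,\lambda}}^{p_1}$ after the harmless shift $j\leftrightarrow k$. For the inner term $G_1$ (indices $j\leq k-3$, whence $\operatorname{supp}f_j\subset A_j$ and $x\in A_k$ forces $|x|\geq2^{k-1}\geq2^{j+1}$), let $b_j$ denote the mean of $b$ over $B(0,2^j)$ and write $[b,T_l]f_j=(b-b_j)T_lf_j-T_l((b-b_j)f_j)$. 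Applying the size bound \eqref{5.8} to each of the two pieces, together with the standard $BMO$ estimates $\|(b-b_j)\chi_k\|_{L^{\vec{q}_2}}\lesssim(k-j)\|b\|_{BMO}\|\chi_k\|_{L^{\vec{q}_2}}$ and $\|(b-b_j)f_j\|_{L^1}\lesssim(k-j)\|b\|_{BMO}\|f_j\|_{L^{\vec{q}_1}}\|\chi_j\|_{L^{\vec{q}_1'}}$ and the elementary $\|\chi_k\|_{L^{\vec{q}_2}}\sim2^{k\sum_i1/q_{2i}}$, $\|\chi_j\|_{L^{\vec{q}_1'}}\sim2^{j(n-\sum_i1/q_{1i})}$, one obtains
\[
2^{k\alpha}\bigl\|([b,T_l]f_j)\chi_k\bigr\|_{L^{\vec{q}_2}}\leq C\,(k-j)\,2^{j\alpha}\|f_j\|_{L^{\vec{q}_1}}\,2^{(j-k)\left(n-\sum_{i=1}^{n}\frac{1}{q_{1i}}-\alpha\right)}.
\]
For $0<p_1\leq1$ one then uses $(\sum_j|a_j|)^{p_1}\leq\sum_j|a_j|^{p_1}$, and for $1<p_1<\infty$ one uses Hölder in the $j$-sum after splitting the geometric exponent in half; in both cases, interchanging the $j$- and $k$-summations and summing the geometric series in $k\geq j+3$ — convergent precisely because $\alpha<n-\sum_i1/q_{1i}$, the polynomial factor $(k-j)^{p_1}$ being swallowed by the geometric decay — yields $G_1\leq C\|f\|_{M\dot{K}_{p_1,\vec{q}_1}^{\alpha,\lambda}}^{p_1}$.

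For the outer term $G_3$ (indices $j\geq k+3$, whence $x\in A_k$ forces $|x|\leq2^k\leq2^{j-2}$) the same manipulation with \eqref{5.9} in place of \eqref{5.8} gives $2^{k\alpha}\|([b,T_l]f_j)\chi_k\|_{L^{\vec{q}_2}}\leq C(j-k)2^{j\alpha}\|f_j\|_{L^{\vec{q}_1}}2^{(k-j)\left(\alpha+\sum_i1/q_{2i}\right)}$. As in the $M_b^l$ and $[b,T]$ arguments, the inner $j$-sum must be broken at $k_0$: for $k+3\leq j\leq k_0$ the estimate proceeds exactly as for $G_1$, the relevant geometric series (now in $k\leq j-3$) converging because $\lambda-\sum_i1/q_{2i}<\alpha$; for the tail $j\geq k_0+1$ one inserts $2^{j\lambda p_1}2^{-j\lambda p_1}$, bounds $2^{-j\lambda}\bigl(\sum_{l\leq j}2^{l\alpha p_1}\|f_l\|_{L^{\vec{q}_1}}^{p_1}\bigr)^{1/p_1}\leq\|f\|_{M\dot{K}_{p_1,\vec{q}_1}^{\alpha,\lambda}}$ by the defining supremum, and is left with the doubly geometric sum $\sum_{j\geq k_0+1}2^{(k-j)\left(\alpha+\sum_i1/q_{2i}-\lambda\right)p_1/2}$ followed by $\sum_{k\leq k_0}2^{k\lambda p_1}$. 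I expect this tail of $G_3$ to be the main obstacle: one has to carry the commutator-induced polynomial factor $(j-k)^{p_1}$, the mixed-norm bookkeeping ($\|\chi_k\|_{L^{\vec{q}_2}}$ against $\|\chi_j\|_{L^{\vec{q}_1'}}$), and the Morrey supremum at the same time, and it is precisely here that the strict lower bound $\lambda-\sum_i1/q_{2i}<\alpha$ is needed (just as $\alpha<n-\sum_i1/q_{1i}$ is needed for $G_1$); everything else is a direct transcription of the computations already carried out in the earlier theorems. Collecting the three bounds gives $\|[b,T_l]f\|_{M\dot{K}_{p_2,\vec{q}_2}^{\alpha,\lambda}}^{p_1}\leq C\|f\|_{M\dot{K}_{p_1,\vec{q}_1}^{\alpha,\lambda}}^{p_1}$, and taking $p_1$-th roots completes the proof.
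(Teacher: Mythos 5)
Your proposal is correct and follows exactly the route the paper intends: the paper itself gives no proof of this theorem (it is explicitly "left for the reader" as being similar to the preceding arguments), and your argument is precisely the transcription of the $I_l$ decomposition from Theorem 3.2 combined with the $BMO$ splitting $[b,T_l]f_j=(b-b_j)T_lf_j-T_l((b-b_j)f_j)$ and the $b_j$-averaging estimates already used for $M_b$ and $M_b^{l}$. The bookkeeping of exponents (the cancellation via $l=\sum_i 1/q_{1i}-\sum_i 1/q_{2i}$, the convergence conditions $\alpha<n-\sum_i 1/q_{1i}$ for the inner sum and $\lambda-\sum_i 1/q_{2i}<\alpha$ for the outer tail) is handled correctly.
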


\begin{corollary}
Let $0<l<n$, $0\leq \lambda < \infty$, $0<p_1\leq p_2\leq \infty$, $1<\vec{q_1}<\frac{1}{l}$, $l=\frac{1}{n}\sum\limits_{i=1}^n\frac{1}{q_{1i}}-\frac{1}{n}\sum\limits_{i=1}^n\frac{1}{q_{2i}}~$and $\lambda-\sum\limits_{i=1}^n\frac{1}{q_{2i}}<\alpha<n-\sum\limits_{i=1}^n\frac{1}{q_{1i}},$ $b\in BMO(\mathbb{R}^n)$. If sublinear operators $T_l$ satisfied the following condition\\
	\begin{equation}
\left|T_lf(x)\right|\leq C\int_{\mathbb{R}}\frac{\left|f(x)\right|}{\left|x-y\right|^{n-l}}dy,\quad \quad f\in L^1(\mathbb{R}^n) \mbox{ with compact support}\quad x\notin suppf.\label{13}
\end{equation}
	And if $[b,T_l]$ is bounded from $L^{\vec{q_1}}\left(\mathbb{R}^n\right)$ to $L^{\vec{q_2}}\left(\mathbb{R}^n\right)$, then $[b,T_l]$ is also bounded from $M\dot{K}_{p_1,\vec{q}_1}^{\alpha,\lambda}(\mathbb{R}^n)$ to $M\dot{K}_{p_2,\vec{q}_2}^{\alpha,\lambda}(\mathbb{R}^n).$
\end{corollary}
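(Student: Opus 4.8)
The plan is to derive this corollary directly from the preceding theorem — the one asserting that $[b,T_l]$ maps $M\dot{K}_{p_1,\vec{q}_1}^{\alpha,\lambda}(\mathbb{R}^n)$ into $M\dot{K}_{p_2,\vec{q}_2}^{\alpha,\lambda}(\mathbb{R}^n)$ provided $[b,T_l]$ is bounded from $L^{\vec{q}_1}$ to $L^{\vec{q}_2}$ and $T_l$ satisfies the local size bounds \eqref{5.8} and \eqref{5.9}. Since the $L^{\vec{q}_1}\to L^{\vec{q}_2}$ boundedness of $[b,T_l]$ is assumed here, and the admissible ranges of $l,\lambda,p_1,p_2,\vec{q}_1,\vec{q}_2,\alpha$ coincide exactly with those of that theorem, the entire task reduces to checking that the single kernel-type estimate \eqref{13} (whose integrand is of course $|f(y)|$) forces both \eqref{5.8} and \eqref{5.9}.

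To do this I would split into the two required regimes. First, suppose $\operatorname{supp}f\subset A_j$ and $|x|\ge 2^{j+1}$; then every $y\in A_j$ satisfies $|y|\le 2^j\le\tfrac12|x|$, so $|x-y|\ge|x|-|y|\ge\tfrac12|x|$, and \eqref{13} gives
$$|T_lf(x)|\le C\int_{A_j}\frac{|f(y)|}{|x-y|^{n-l}}\,dy\le\frac{C}{|x|^{n-l}}\int_{\mathbb{R}^n}|f(y)|\,dy=C\frac{\|f\|_{L^1}}{|x|^{n-l}},$$
which is \eqref{5.8}. Second, suppose $\operatorname{supp}f\subset A_j$ and $|x|\le 2^{j-2}$; then $y\in A_j$ forces $|y|\ge 2^{j-1}$, hence $|x-y|\ge|y|-|x|\ge 2^{j-1}-2^{j-2}=2^{j-2}$, and \eqref{13} gives $|T_lf(x)|\le C\,2^{-(j-2)(n-l)}\|f\|_{L^1}$; absorbing $2^{2(n-l)}$ into $C$ yields \eqref{5.9}.

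Once \eqref{5.8} and \eqref{5.9} are in hand, the preceding theorem applies verbatim and delivers $\|[b,T_l]f\|_{M\dot{K}_{p_2,\vec{q}_2}^{\alpha,\lambda}}\le C\|f\|_{M\dot{K}_{p_1,\vec{q}_1}^{\alpha,\lambda}}$. I do not anticipate any genuine obstacle: the only thing needing attention is the elementary dyadic bookkeeping in the two estimates above — arranging the constants so that $|x-y|\gtrsim\max(|x|,2^j)$ on the far region and $|x-y|\gtrsim 2^j$ on the inner region — while all the substantial analysis (the decomposition $f=\sum_j f_j$, the near/middle/far partition of the sum, and the Hölder estimates carrying the extra factor $(k-j)$ produced by $\|b\|_{\mathrm{BMO}}$) is already carried out in the proof of that theorem.
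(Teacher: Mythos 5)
Your derivation is correct and is exactly the route the paper intends: the corollary is stated as an immediate consequence of the preceding theorem, and your verification that the kernel condition \eqref{13} (with the integrand read as $|f(y)|$) implies both size conditions \eqref{5.8} and \eqref{5.9} via $|x-y|\geq \frac{1}{2}|x|$ on the outer region and $|x-y|\geq 2^{j-2}$ on the inner region is the standard dyadic argument that the paper leaves implicit. Nothing further is needed.
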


\hspace*{-0.6cm}\textbf{\bf Competing interests}\\
The authors declare that they have no competing interests.\\

\hspace*{-0.6cm}\textbf{\bf Funding}\\
The research was supported by Natural Science Foundation of China (Grant Nos. 12061069).\\

\hspace*{-0.6cm}\textbf{\bf Authors contributions}\\
All authors contributed equality and significantly in writing this paper. All authors read and approved the final manuscript.\\

\hspace*{-0.6cm}\textbf{\bf Acknowledgments}\\
The authors would like to express their thanks to the referees for valuable advice regarding previous version of this paper.\\

\hspace*{-0.6cm}\textbf{\bf Authors detaials}\\
Mingwei shi and Jiang Zhou*, moluxiangfeng888@163.com and zhoujiang@xju.edu.cn, College of Mathematics and System Science, Xinjiang University, Urumqi, 830046, P.R China.

\vskip 0.5cm

\bigskip
\noindent Mingwei shi and Jiang Zhou\\
\medskip
\noindent
College of Mathematics and System Sciences\\
Xinjiang University\\
Urumqi 830046\\
\smallskip
\noindent{D-mail }:\\
\texttt{moluxiangfeng888@163.com} (Mingwei shi)\\
\texttt{zhoujiang@xju.edu.cn} (Jiang Zhou)
\bigskip \medskip
\end{document}